\numberwithin{equation}{section}
\theoremstyle{definition}
\numberwithin{equation}{section}
\newcommand{\ncom}{\newcommand}
\ncom{\beq}{\begin{equation}}
	\ncom{\eeq}{\end{equation}}
\ncom{\bea}{\begin{eqnarray*}}
	\ncom{\eea}{\end{eqnarray*}}
\ncom{\beqa}{\begin{eqnarray}}
	\ncom{\eeqa}{\end{eqnarray}}
\ncom{\nno}{\nonumber}
\ncom{\non}{\nonumber}
\ncom{\ds}{\displaystyle}
\ncom{\half}{\frac{1}{2}}
\ncom{\mbx}{\makebox{.25cm}}
\ncom{\hs}{\mbox{\hspace{.25cm}}}
\ncom{\rar}{\rightarrow}
\ncom{\Rar}{\Rightarrow}
\ncom{\noin}{\noindent}
\ncom{\bc}{\begin{center}}
	\ncom{\ec}{\end{center}}
\ncom{\sz}{\scriptsize}
\ncom{\rf}{\ref}
\ncom{\s}{\sqrt{2}}
\ncom{\sgm}{\sigma}
\ncom{\Sgm}{\Sigma}
\ncom{\psgm}{\sigma^{\prime}}
\ncom{\dt}{\delta}
\ncom{\Dt}{\Delta}
\ncom{\lmd}{\lambda}
\ncom{\Lmd}{\Lambda}
\ncom{\Th}{\Theta}
\ncom{\e}{\eta}
\ncom{\eps}{\epsilon}
\ncom{\pcc}{\stackrel{P}{>}}
\ncom{\lp}{\stackrel{L_{p}}{>}}
\ncom{\dist}{{\rm\,dist}}
\ncom{\sspan}{{\rm\,span}}
\ncom{\re}{{\rm Re\,}}
\ncom{\im}{{\rm Im\,}}
\ncom{\sgn}{{\rm sgn\,}}
\ncom{\ba}{\begin{array}}
	\ncom{\ea}{\end{array}}
\ncom{\hone}{\mbox{\hspace{1em}}}
\ncom{\htwo}{\mbox{\hspace{2em}}}
\ncom{\hthree}{\mbox{\hspace{3em}}}
\ncom{\hfour}{\mbox{\hspace{4em}}}
\ncom{\vone}{\vskip 2ex}
\ncom{\vtwo}{\vskip 4ex}
\ncom{\vonee}{\vskip 1.5ex}
\ncom{\vthree}{\vskip 6ex}
\ncom{\vfour}{\vspace*{8ex}}
\ncom{\norm}{\|\;\;\|}
\ncom{\integ}[4]{\int_{#1}^{#2}\,{#3}\,d{#4}}
\ncom{\vspan}[1]{{{\rm\,span}\{ #1 \}}}
\ncom{\dm}[1]{ {\displaystyle{#1} } }
\ncom{\ri}[1]{{#1} \index{#1}}
\newtheorem{theorem}{\bf Theorem}[section]
\newtheorem{remark}{\bf Remark}[section]
\newtheorem{proposition}{Proposition}[section]
\newtheorem{lemma}{Lemma}[section]
\newtheoremstyle
{remarkstyle}
{}
{11pt}
{}
{}
{\bfseries}
{:}
{     }
{\thmname{#1} \thmnumber{#2} }
\theoremstyle{remarkstyle}
\def\eps{\varepsilon}
\begin{document}
	\title{On Erlang Queue with Multiple Arrivals and its Time-changed Variant}
		\author[Rohini Bhagwanrao Pote]{Rohini Bhagwanrao Pote}
		\address{Rohini Bhagwanrao pote, Department of Mathematics, Indian Institute of Technology Bhilai, Durg 491002, India.}
		\email{rohinib@iitbhilai.ac.in}
	\author[Kuldeep Kumar Kataria]{Kuldeep Kumar Kataria}
	\address{Kuldeep Kumar Kataria, Department of Mathematics, Indian Institute of Technology Bhilai, Durg 491002, India.}
	\email{kuldeepk@iitbhilai.ac.in}
	\subjclass[2010]{Primary: 60K15; 60K20; Secondary: 60K25; 26A33}
	\keywords{generalized counting process; Erlang service distribution; transient solution; queue length, busy period, subordinator.}
	\date{\today}
	\begin{abstract}
		We introduce and study a queue with the Erlang service system and whose arrivals are governed by a counting process in which there is a possibility of finitely many arrivals in an infinitesimal time interval. We call it the Erlang queue with multiple arrivals. Some of its distributional properties are obtained that includes the {\it state-phase} probabilities, the mean queue length and the distribution of busy period {\it etc.} Also, we study a time-changed variant of it by subordinating it with an independent inverse stable subordinator where we obtain its state probabilities and the mean queue length.
	\end{abstract}
	
	\maketitle 
	\section{Introduction}
	The Erlang queue is an important queueing theory model in which  arrivals occur according to the Poisson process and the service system has Erlang distribution. 	
	It has a wide range of applications in fields like telecommunications, finance, {\it etc}. For example, it is a useful model in call centers to predict number of agents required to handle incoming calls (see Spath and F{\"a}hnrich (2006)) and to model financial data (see Cahoy {\it et al.} (2015)). Luchak (1956), (1958) studied a single channel queue with Poisson arrivals and a general class of service-time distributions. Griffiths {\it et al.} (2006) obtained the transient solution of the Erlang queue.    
	
    Di Crescenzo {\it et al}. (2016) introduced and studied a generalization of  the Poisson process, namely, the generalized counting process (GCP). In GCP, there is a possibility of finitely many arrivals $1,2,\dots,l$  with positive rates $\lambda_{1}, \lambda_{2},\dots,\lambda_{l}$, respectively. It's a L\'evy process such that, in an infinitesimal interval of length $h$, the probability of no arrival is  $1-\sum_{j=1}^{l}\lambda_{j} h+o(h)$, an arrival of size $j\in\{1,2,\dots,l\}$ occurs with probability $\lambda_{j}h+o(h)$ and arrival of size more than $l$ has negligible probability, that is, of $o(h)$. For more details on GCP that include its superposition and thinning properties, and its time-changed variants, we refer the reader to  Kataria and Khandakar (2022), Dhillon and Kataria (2024a), Khandakar and Kataria (2024). Dhillon and Kataria (2024b) studied martingale characterizations of GCP and its time-changed variants. Recently, various growth processes with multiple transitions have been studied by many authors (see Chen {\it et al.} (2020), Vishwakarma and Kataria (2024), {\it etc}).
	
	A time-changed process is formed by composing a random process with a non-decreasing random time process. Generally, the involved component processes are assumed to be independent. Such processes are used in various scientific fields where time does not occur in regular or in deterministic manner. In the last decade, many time-changed processes have been studied, for example, the time fractional Poisson process (see Beghin and Orsingher (2009), Meerschaert {\it et al.} (2011)), the space fractional Poisson process (see Orsingher and Polito (2012)) and time-changed birth-death processes (see Orsingher and Polito (2011), Kataria and Vishwakarma (2025)), {\it etc}. 
	Cahoy {\it et al.} (2015) studied fractional $M/M/1$ queue where its state probabilities, algorithms to simulate $M/M/1$ queue and related birth-death process are obtained.
	Ascione {\it{et al.}} (2020) introduced and studied a time-changed Erlang queue where its state probabilities, mean queue length, distribution of busy period {\it etc}. are obtained. %They obtained the {\it state-phase} probabilities, mean queue length and busy period for the fractional Erlang queues.
	
	In this paper, we introduce and study an Erlang queue where arrivals occur according to the GCP and its service system has the Erlang distribution. We call this queue as the Erlang queue with multiple arrivals.

	%In section \ref{sec2}, we give few definitions and known results from the fractional calculus and about the subordinator. In particular, definition of fractional integral, Caputo fractional derivative its Laplace transform, Mittag-Leffler function its Laplace transform, stable subordinator, inverse stable subordinator and its Laplace transforms are given.
	
	In Section \ref{sec3}, we define the Erlang queue with multiple arrivals and derive the system of difference-differential equations that governs its {\it state-phase} probabilities. We obtain the probability of no customers in the system at time $t\geq0$. Also, the explicit expression of transient {\it state-phase} probabilities are derived. Further, we define its queue length process, and obtain the mean queue length, the second moment of queue length process and distribution of its busy period.
	
	In Section \ref{sec4}, we study a time-changed variant of the Erlang queue with multiple arrivals. It is obtained by time-changing it with an independent inverse stable subordinator.
	The system of difference-differential equations that governs its state probabilities is derived. We obtain the state probabilities for time-changed Erlang queue with multiple arrivals. Also, the closed form expressions for its mean queue length is obtained.
	\section{Erlang queue with multiple arrivals}\label{sec3}
	In this section, we define a queue whose arrivals are modeled using GCP and it comprises of a service system which is composed of $k$ independent exponentially distributed phases each with mean $1/k \mu$. That is, its service system is distributed according to Erlang distribution with shape parameter $k$ and rate $\mu$.
	
	Let $\mathcal{N}(t)$ denote the number of customers in the system at time $t \geq 0$. For $\mathcal{N}(t)>0$, let $\mathcal{S}(t)$ denote the phase of a customer being served at time $t$ and for $\mathcal{N}(t)=0$, it is assumed that $\mathcal{S}(t)=0$. So, 
	\begin{equation*}
		\mathcal{Q}(t)=(\mathcal{N}(t),\mathcal{S}(t)),\, t\geq 0,
	\end{equation*}
	is a {\it state-phase} process with state space $\mathcal{H}_{0}=\mathcal{H} \cup \{(0,0)\}$, 
	where $\mathcal{H}=\{(n,s) \in \mathbb{N} \times \mathbb{N}: n \geq 1, \, 1 \leq s \leq k \} $ and $\mathbb{N}$ denotes the set of positive integers. We call this {\it state-phase} process $\{\mathcal{Q}(t)\}_{t\geq0}$ as the Erlang queue with multiple arrivals. It is assumed that there are no customers in the system at time 0, that is, $\mathcal{N}(0)=0$   and it is a single channel queue.
	
	For $t \geq 0$, let us denote the transient {\it state-phase} probabilities by
	\begin{equation*}
		p_{0}(t)=\mathrm{Pr}(\mathcal{Q}(t)=(0,0)|\mathcal{Q}(0)=(0,0))
	\end{equation*}
	and
	\begin{equation*}
		p_{n,s}(t)=\mathrm{Pr}(\mathcal{Q}(t)=(n,s)|\mathcal{Q}(0)=(0,0)), \, (n,s) \in \mathcal{H}, 
	\end{equation*}
	that is, $p_{0}(t)$ is the probability that there are no customers in the system at time $t$ and $p_{n,s}(t)$ is the probability that at time $t$ there are
	$n$ customers in the system and the customer
	in service is in phase $s$. As $\mathcal{S}(t)=0$ for $\mathcal{N}(t)=0$, we take $p_{0,s}(t)=0$, $1\leq s \leq k$.
	
	Further, it is assumed that the customers arrive according to GCP, that is, at any epoch, $m$  customers arrive with positive rate $\lambda_{m}$, $m=1,2, \dots , l$.  
	%Here, we assume that at most $l$ arrivals are possible during a small time interval of length $t$ and probability of $m (1\leq m \leq l) $ arrivals is $\lambda c_{m} t$ where $c_{m}=\frac{\lambda_{m}}{\lambda}$ ($\lambda = \sum_{m=1}^{l}\lambda_{m}$) as $\lambda_{m}$ is a rate at which $m$ arrivals occur in a small interval. \\
	% We define a queue having GCP arrival with parameter $\lambda $ where maximum no of arrival possible are $l$ and service time is distributed Erlang-type $k$ with parameter $\mu$. We analyze this model by considering Erlang distribution made up of $\k$ exponential phases, each phase with mean . \\
	Let $\lambda=\lambda_{1}+\lambda_{2}+ \dots + \lambda_{l}$ and $c_{m}=\lambda_{m}/\lambda$ for all $m=1,2, \dots , l$. That is, in an infinitesimal time interval of length $h$, the arrival of size $m$ occurs with conditional probabilities  given by
		\begin{equation}
		q_{a}^{0}(m,h)=\mathrm{Pr}(\mathcal{Q}(t+h)=(m,k)|\mathcal{Q}(t)=(0,0))= \left\{
		\begin{array}{ll}
			1-\lambda h+o(h),\, m=0,\\
			\lambda c_{m}h+o(h),\,1\leq m\leq l,\\
			o(h),\, \text{otherwise},
		\end{array}
		\right. \label{1.2}
	\end{equation} 
	and
    \begin{equation}
    	q_{a}(m,h)=\mathrm{Pr}(\mathcal{Q}(t+h)=(i+m,s)|\mathcal{Q}(t)=(i,s))=\left\{
    	\begin{array}{ll}
    		1-\lambda h+o(h),\, m=0,\\
    		\lambda c_{m}h+o(h),\,1\leq m\leq l,\\
    		o(h),\, \text{otherwise},
    	\end{array}
    	\right.   \label{1.1}
    \end{equation}
	where $i\in \mathbb{N}$ and $1\leq s\leq k$. 
	
	Also, in such a queue, the
	service system is composed of $k$ phases each distributed exponentially with mean $1/k\mu$. Thus, in an infinitesimal time interval of length $h$, the departure of size $r$ occurs with conditional probabilities given by
	\begin{align}
		q_{d}^{0}(0,h)&=\mathrm{Pr}(\mathcal{Q}(t+h)=(0,s)|\mathcal{Q}(t)=(0,0))=
		\begin{cases}
			1,\,s=0,\\
			0,\, \text{otherwise},
		\end{cases}    \label{1.4}
	\end{align}
	and
	\begin{align}
		q_{d}(r,h)&=\mathrm{Pr}(\mathcal{Q}(t+h)=(i,s-r)|\mathcal{Q}(t)=(i,s))=
		\begin{cases}
			1-k\mu h+o(h),\, r=0,\\
			k\mu h+o(h),\, r=1,\\
			o(h),\, \text{otherwise},
		\end{cases}    \label{1.3}
	\end{align}
	where $i \in \mathbb{N}$ and $1\leq s\leq k$.
	
	Now, we obtain the system of differential equations that governs the {\it state-phase} probabilities of the Erlang queue with multiple arrivals. 
	
	\begin{theorem}\label{t3.1}
		The {\it state-phase} probabilities of $\{\mathcal{Q}(t)\}_{t\geq 0}$ satisfy the following system of difference-differential equations:
			\begin{align*}
				\frac{\mathrm {d}}{\mathrm {d}t}p_{0}(t)
				&=-\lambda p_{0}(t)+k\mu p_{1,1}(t), \nonumber \\
				\frac{\mathrm {d}}{\mathrm {d}t}p_{1,s}(t)
				&=-(\lambda + k \mu)p_{1,s}(t)+k\mu p_{1,s+1}(t), \, 1 \leq s \leq k-1, \nonumber \\
				\frac{\mathrm {d}}{\mathrm {d}t}p_{1,k}(t)
				&=-(\lambda + k \mu)p_{1,k}(t)+k\mu p_{2,1}(t)+ \lambda c_{1}p_{0}(t),  \\
				\frac{\mathrm {d}}{\mathrm {d}t}p_{n,s}(t)
				&=-(\lambda + k \mu)p_{n,s}(t)+k\mu p_{n,s+1}(t)+\lambda \sum_{m=1}^{\text{min}\{n,l\}}c_{m}p_{n-m,s}(t), \,  1\leq s \leq k-1,  \, n \geq 2,  \\
				\frac{\mathrm {d}}{\mathrm {d}t}p_{n,k}(t)
				&=-(\lambda + k \mu)p_{n,k}(t)+k\mu p_{n+1,1}(t)+\lambda \sum_{m=1}^{n-1}c_{m}p_{n-m,k}(t)+\lambda c_{n}p_{0}(t),\, 2\leq n\leq l, \nonumber \\
				\frac{\mathrm {d}}{\mathrm {d}t}p_{n,k}(t)&=-(\lambda + k \mu)p_{n,k}(t)+k\mu p_{n+1,1}(t)+ \lambda \sum_{m=1}^{l}c_{m}p_{n-m,k}(t),\,n >l,
		\end{align*}
		with initial conditions 
		$p_{0}(0)=1$ and
		$p_{n,s}(0)=0$, $1\leq s\leq k$, $n\geq 1$.
	\end{theorem}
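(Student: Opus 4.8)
The plan is to derive each of the six equations by a standard first-step (forward Kolmogorov) argument. Since $\{\mathcal{Q}(t)\}_{t\ge0}$ is a continuous-time Markov chain on the countable state space $\mathcal{H}_0$ whose jump rate out of any state is bounded by $\lambda+k\mu$, the forward equations are valid and the infinitesimal computations below are legitimate. For each target state I would condition on the value of $\mathcal{Q}(t)$ and use the law of total probability together with the one-step conditional probabilities \eqref{1.2}--\eqref{1.3} to write $p_0(t+h)$, respectively $p_{n,s}(t+h)$, in terms of the probabilities at time $t$; then subtract the value at $t$, divide by $h$, and let $h\to0$. The key simplification is that the arrival mechanism (the GCP) and the service/phase mechanism act independently, so the relevant one-step probabilities factor: remaining in a busy state $(n,s)$ has probability $q_a(0,h)q_d(0,h)=1-(\lambda+k\mu)h+o(h)$, a phase completion with no arrival has probability $q_a(0,h)q_d(1,h)=k\mu h+o(h)$, an arrival of size $m$ with no phase change has probability $q_a(m,h)q_d(0,h)=\lambda c_m h+o(h)$, while any event combining an arrival of positive size with a phase completion is $O(h^2)=o(h)$ and is discarded.

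For the empty state I would observe that $\mathcal{Q}(t+h)=(0,0)$ occurs either from $\mathcal{Q}(t)=(0,0)$ with no arrival, contributing $(1-\lambda h)p_0(t)$, or from $\mathcal{Q}(t)=(1,1)$ with the customer completing service and no arrival, contributing $k\mu h\,p_{1,1}(t)$, everything else being $o(h)$; this yields the first equation. For a target $(n,s)$ with $n\ge1$ I would enumerate the admissible one-step in-flows: staying in $(n,s)$, weight $1-(\lambda+k\mu)h$; if $s\le k-1$, a phase completion from $(n,s+1)$, weight $k\mu h$; if $s=k$, a service completion (departure) from $(n+1,1)$, which leaves the next customer at phase $k$, weight $k\mu h$; an arrival of size $m$ that keeps the serving phase, coming from the busy state $(n-m,s)$, weight $\lambda c_m h$; and, only when $s=k$, an arrival of size $n$ into the empty system, which starts a customer at phase $k$, weight $\lambda c_n h$, valid precisely when $n\le l$. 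The two conventions built into \eqref{1.2}--\eqref{1.1} dictate the ranges: an arrival into a busy system leaves the serving phase unchanged, whereas an arrival into the empty system places the new customer at phase $k$. Hence for $s\le k-1$ no arrival can originate at the empty state, the arrival sum runs over $1\le m\le\min\{n,l\}$, and the term $m=n$ (possible only if $n\le l$) contributes $c_n p_{0,s}(t)=0$; for $s=k$ an arrival of size $m<n$ comes from $(n-m,k)$ while the size-$n$ arrival from $(0,0)$ supplies the boundary term when $n\le l$, and for $n>l$ that last jump is $o(h)$ so the arrival sum truncates at $m=l$.

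Assembling these in-flows, forming the difference quotient $\bigl(p_{n,s}(t+h)-p_{n,s}(t)\bigr)/h$ and letting $h\to0$ gives, according to whether $s<k$ or $s=k$, $n=1$ or $n\ge2$, and $n\le l$ or $n>l$, exactly the remaining five equations of the statement; in particular the terms $\lambda c_1 p_0(t)$ in the equation for $p_{1,k}$ and $\lambda c_n p_0(t)$ in the equation for $p_{n,k}$ with $2\le n\le l$ are the lone instances of the empty-state in-flow. The initial conditions $p_0(0)=1$ and $p_{n,s}(0)=0$ follow immediately from the assumption $\mathcal{N}(0)=0$. I expect the only real obstacle to be the combinatorial bookkeeping in the previous paragraph: ensuring that every one-step transition into a given state is counted exactly once and that the phase-reset convention for new arrivals is applied consistently, so that the arrival sums carry the right index ranges and the empty-state terms appear only where they belong. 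Once that is settled, the $o(h)$ contributions --- collected over the at most $l+2$ states that actually feed the target, hence still $o(h)$ because the jump rates are uniformly bounded --- vanish in the limit, and differentiability of each $p_{n,s}$ is automatic from the resulting finite linear recursions.
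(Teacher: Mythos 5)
Your proposal is correct and follows essentially the same route as the paper: a first-step decomposition of $p_{0}(t+h)$ and $p_{n,s}(t+h)$ over the admissible one-step in-flows using the factored one-step probabilities \eqref{1.2}--\eqref{1.3}, followed by forming the difference quotient and letting $h\to0$, with the same handling of the phase-reset convention (arrivals into the empty system and departures from $(n+1,1)$ landing in phase $k$) and the same index ranges $\min\{n,l\}$, $n\le l$ versus $n>l$. No gaps to report.
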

	\begin{proof}
		Let us consider an infinitesimal time interval of length $h$ such that $o(h)/h\to 0 $ as $h\to0$. Then, we have the following possibilities: 
		\begin{align*}
			p_{0}(t+h)
			&=p_{0}(t)q_{a}^{0}(0,h)q_{d}^{0}(0,h)+p_{1,1}(t)q_{a}(0,h)q_{d}(1,h),\\
			p_{1,s}(t+h)
			&=p_{1,s}(t)q_{a}(0,h)q_{d}(0,h)+p_{1,s+1}(t)q_{a}(0,h)q_{d}(1,h),\, 1\leq s \leq k-1,\\p_{1,k}(t+h)
			&=p_{1,k}(t)q_{a}(0,h)q_{d}(0,h)+p_{2,1}(t)q_{a}(0,h)q_{d}(1,h)+ p_{0}(t)q_{a}^{0}(1,h)q_{d}^{0}(0,h), \\
			p_{n,s}(t+h)
			&=p_{n,s}(t)q_{a}(0,h)q_{d}(0,h)+p_{n,s+1}(t)q_{a}(0,h)q_{d}(1,h)\\
			&+\sum_{m=1}^{\text{min} \{n,l\}}p_{n-m,s}(t)q_{a}(m,h)q_{d}(0,h),\, n \geq 2,\, 1 \leq s \leq k-1,\\
			p_{n,k}(t+h)=
			&p_{n,k}(t)q_{a}(0,h)q_{d}(0,h)+p_{n+1,1}(t)q_{a}(0,h)q_{d}(1,h)\\
			&+\sum_{m=1}^{n-1}p_{n-m,k}(t)q_{a}(m,h)q_{d}(0,h)+p_{0}(t)q_{a}^{0}(n,h)q_{d}^{0}(0,h), \,2\leq n\leq l
		\end{align*}
		and
		\begin{align*}
			p_{n,k}(t+h)=
			&p_{n,k}(t)q_{a}(0,h)q_{d}(0,h)+p_{n+1,1}(t)q_{a}(0,h)q_{d}(1,h)\\
			&+\sum_{m=1}^{l}p_{n-m,k}(t)q_{a}(m,h)q_{d}(0,h), \,n>l.
		\end{align*}
		On using (\ref{1.1})-(\ref{1.4}), we get 
		\begin{align*}
			\frac{p_{0}(t+h)-p_{0}(t)}{h}=
			&-\lambda p_{0}(t)+ k\mu p_{1,1}(t)+h(-\lambda k\mu p_{1,1}(t))+\frac{o(h)}{h}, \\
			\frac{p_{1,s}(t+h)-p_{1,s}(t)}{h}=
			&-(\lambda + k \mu ) p_{1,s}(t)+ k\mu p_{1,s+1}(t)+h(\lambda k\mu p_{1,s}(t)\\
			&-\lambda k\mu p_{1,s+1}(t))+\frac{o(h)}{h}, \\
			\frac{p_{1,k}(t+h)-p_{1,k}(t)}{h}=
			&-(\lambda + k \mu ) p_{1,k}(t)+ k\mu p_{2,1}(t)+\lambda c_{1}p_{0}(t)+h(\lambda k\mu p_{1,k}(t)-\lambda k\mu p_{2,1}(t))\\
			&+\frac{o(h)}{h}, \\
			\frac{p_{n,s}(t+h)-p_{n,s}(t)}{h}=
			&-(\lambda + k \mu ) p_{n,s}(t)+ k\mu p_{n,s+1}(t)+\lambda \sum_{m=1}^{\text{min}\{n,l\}} c_{m}p_{n-m,s}(t)+h\big(\lambda k\mu p_{n,s}(t)\\
			&-\lambda k\mu p_{n,s+1}(t)-\lambda k\mu \sum_{m=1}^{\text{min}\{n,l\}}c_{m}p_{n-m,s}(t)\big)+\frac{o(h)}{h},\\ \frac{p_{n,k}(t+h)-p_{n,k}(t)}{h}=
			&-(\lambda + k \mu) p_{n,k}(t)+ k\mu p_{n+1,1}(t)+\lambda\Big( \sum_{m=1}^{n-1}c_{m}p_{n-m,k}(t)+c_{n}p_{0}(t)\Big)\\
			&+h\big(\lambda k\mu p_{n,k}(t)-\lambda k\mu p_{n+1,1}(t)-\lambda k\mu \sum_{m=1}^{n-1}c_{m}p_{n-m,k}(t)\big)+\frac{o(h)}{h}
		\end{align*}
		and
		\begin{align*}
			\frac{p_{n,k}(t+h)-p_{n,k}(t)}{h}&=-(\lambda + k \mu) p_{n,k}(t)+ k\mu p_{n+1,1}(t)+\lambda \sum_{m=1}^{l}c_{m}p_{n-m,k}(t)\\
			&\ \ +h\big(\lambda k\mu p_{n,k}(t)-\lambda k\mu p_{n+1,1}(t)-\lambda k\mu \sum_{m=1}^{l}c_{m}p_{n-m,k}(t)\big)+\frac{o(h)}{h}.
		\end{align*}		
		On letting $h\to0$, we get the required result.
	\end{proof}
	\begin{remark}
		For $l=1$, the result in Theorem \ref{t3.1} reduces to the corresponding result for $M/E_{k}/1$ queue (see Griffiths {\it et al.} (2006), Eq. (4)).
	\end{remark}			
	Let us consider the following generating function:
	\begin{align}\label{3.2a}
		G(x,t)&=p_{0}(t)+\sum_{n=1}^{\infty} \sum_{s=1}^{k}x^{(n-1)k+s}p_{n,s}(t), \,t\geq0.
	\end{align}
	Note that $G(x,t)$ is analytic in $|x|\leq1$.
	%\textcolor{blue}{We assume that, initially there are no customers that is $p_{0}(0)=1$ and $p_{n,s}(0)=0, n\in\mathbb{N}, 1\leq s \leq k$, hence $G(x,0)=1$}
	
	\begin{proposition}\label{prep1}
		The generating function of $\{\mathcal{Q}(t)\}_{t\geq0}$ solves \begin{equation}\label{3.2}
			x \frac{\partial}{\partial t}G(x,t)=\Big(k\mu(1-x)-\lambda x\Big(1-\sum_{m=1}^{l}c_{m}x^{mk}\Big)\Big)G(x,t)-k\mu(1-x)p_{0}(t),
		\end{equation}
		with initial condition $G(x,0)=1$.
	\end{proposition}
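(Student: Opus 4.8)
The plan is to derive the PDE \eqref{3.2} directly from the system of difference-differential equations in Theorem \ref{t3.1} by multiplying each equation by the appropriate power of $x$ dictated by the definition \eqref{3.2a} of $G(x,t)$ and summing over all states. Concretely, I would multiply the equation for $\frac{\mathrm d}{\mathrm d t}p_{0}(t)$ by $x^{0}=1$, the equation for $\frac{\mathrm d}{\mathrm d t}p_{n,s}(t)$ by $x^{(n-1)k+s}$ for each $(n,s)\in\mathcal H$, and add everything together. On the left side this yields $\frac{\partial}{\partial t}G(x,t)$ by term-by-term differentiation (justified since $G$ is analytic in $|x|\le 1$). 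The right side splits into three groups of terms: the $-(\lambda+k\mu)$ diagonal terms, the $k\mu$ ``phase-advance/next-customer'' terms, and the $\lambda\sum c_m$ arrival terms, plus the isolated $p_0(t)$ contributions.

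First I would handle the diagonal terms: summing $-(\lambda+k\mu)x^{(n-1)k+s}p_{n,s}(t)$ over $\mathcal H$ together with $-\lambda p_0(t)$ gives $-(\lambda+k\mu)(G(x,t)-p_0(t))-\lambda p_0(t)=-(\lambda+k\mu)G(x,t)+k\mu p_0(t)$. Next, the $k\mu$-terms: the contributions $k\mu p_{1,1}(t)$ (from the $p_0$ equation), $k\mu p_{1,s+1}(t)$, $k\mu p_{n,s+1}(t)$, and $k\mu p_{n+1,1}(t)$ together represent, for each state $(n,s)$, the state reached after one phase departure. Re-indexing, the total $k\mu$-contribution is $k\mu$ times $\sum$ over states of $x^{(n-1)k+s}$ applied to the probability of the state one step ``ahead'' in the lexicographic order $(1,1),(1,2),\dots,(1,k),(2,1),\dots$; since the exponent $(n-1)k+s$ increases by exactly $1$ at each such step, this sum equals $k\mu\cdot\frac{1}{x}(G(x,t)-p_0(t))$ — the $p_0(t)$ subtracted because the lowest state $(1,1)$ maps back to the empty state. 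Combining with the $+k\mu p_0(t)$ already obtained, the $k\mu$ part of the right side is $k\mu\big(\tfrac1x G(x,t)-\tfrac1x p_0(t)+p_0(t)\big)=\tfrac{k\mu}{x}G(x,t)-k\mu\tfrac{1-x}{x}p_0(t)$.

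Then I would handle the arrival terms. The terms $\lambda c_1 p_0(t)$, $\lambda c_n p_0(t)$, $\lambda\sum_{m=1}^{\min\{n,l\}}c_m p_{n-m,s}(t)$, and $\lambda\sum_{m}c_m p_{n-m,k}(t)$ together encode: from state $(n-m,s)$ an arrival of $m$ customers moves to $(n,s)$, and from the empty state an arrival of $n$ customers moves to $(n,k)$. Since an arrival of $m$ customers shifts the ``position index'' $(n-1)k+s$ up by exactly $mk$ (the phase $s$ is unchanged, or is set to $k$ from the empty state which has index $0$ and lands at index $nk=(n-1)k+k$), the total arrival contribution is $\lambda\sum_{m=1}^{l}c_m x^{mk}$ times $\big(p_0(t)+\sum_{(n,s)}x^{(n-1)k+s}p_{n,s}(t)\big)=\lambda\big(\sum_{m=1}^l c_m x^{mk}\big)G(x,t)$; the bookkeeping of the upper limits $\min\{n,l\}$ versus $l$ and the separate $p_0$ terms is exactly what makes the sums over $n$ telescope into the full product. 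Adding the three groups and multiplying through by $x$ gives
\[
x\frac{\partial}{\partial t}G(x,t)=-(\lambda+k\mu)xG(x,t)+k\mu G(x,t)-k\mu(1-x)p_0(t)+\lambda x\Big(\sum_{m=1}^{l}c_m x^{mk}\Big)G(x,t),
\]
and collecting the coefficient of $G(x,t)$ yields $k\mu(1-x)-\lambda x\big(1-\sum_{m=1}^{l}c_m x^{mk}\big)$, which is \eqref{3.2}. The initial condition $G(x,0)=1$ is immediate from \eqref{3.2a} and the initial conditions $p_0(0)=1$, $p_{n,s}(0)=0$. The main obstacle — really the only delicate point — is the careful re-indexing in the $k\mu$-sum and the arrival-sum, namely verifying that the exponent shifts line up (a $+1$ shift for phase departures and a $+mk$ shift for size-$m$ arrivals) and that the ``boundary'' states $(0,0)$ and $(1,1)$ contribute the correct $p_0(t)$ corrections so that the partial sums over $n$ genuinely telescope into the claimed closed form; handling the cases $n\le l$ and $n>l$ uniformly via $\min\{n,l\}$ is what makes this work.
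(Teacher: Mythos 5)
Your proposal is correct and follows essentially the same route as the paper: multiply each equation of Theorem \ref{t3.1} by the power of $x$ dictated by \eqref{3.2a}, sum over all states, and regroup the diagonal, departure ($k\mu$, index shift $-1$) and arrival ($\lambda c_m$, index shift $+mk$) terms into $G(x,t)$ and $p_0(t)$ contributions; the only cosmetic difference is that the paper carries the extra factor $x$ from the start while you multiply through by $x$ at the end. Your index-shift bookkeeping, including the boundary $p_0$ corrections, matches the paper's explicit re-indexed sums, so there is no gap.
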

	\begin{proof}
		From (\ref{3.2a}) and using Theorem \ref{t3.1}, we get {\footnotesize\begin{align*}
				x\frac{\partial}{\partial t}G(x,t)&=x\frac{\mathrm{d}}{\mathrm{d}t}p_{0}(t)+\sum_{s=1}^{k-1}x^{s+1}\frac{\mathrm{d}}{\mathrm{d}t}p_{1,s}(t)+x^{k+1}\frac{\mathrm{d}}{\mathrm{d}t}p_{1,k}(t)\\
				&\ \ +\sum_{n=2}^{\infty} \sum_{s=1}^{k-1}x^{k(n-1)+s+1} \frac{\mathrm{d}}{\mathrm{d}t}p_{n,s}(t) +\sum_{n=2}^{l}x^{nk+1}\frac{\mathrm{d}}{\mathrm{d}t}p_{n,k}(t)+\sum_{n=l+1}^{\infty}x^{nk+1}\frac{\mathrm{d}}{\mathrm{d}t}p_{n,k}(t)\\
				&=x(-\lambda p_{0}(t)+k\mu p_{1,1}(t))+\sum_{s=1}^{k-1}x^{s+1}(-(\lambda +k \mu)p_{1,s}(t)+k\mu p_{1,s+1}(t))\\
				&\ \ +x^{k+1}(-(\lambda +k \mu)p_{1,k}(t)+k\mu p_{2,1}(t)+\lambda c_{1}p_{0}(t))\\
				&\ \ +\sum_{n=2}^{\infty} \sum_{s=1}^{k-1}x^{k(n-1)+s+1}\Big(-(\lambda +k \mu)p_{n,s}(t)+k\mu p_{n,s+1}(t)+\lambda \sum_{m=1}^{\text{min}\{n,l\}}c_{m} p_{n-m,s}(t)\Big)\\
				&\ \ +\sum_{n=2}^{l} x^{nk+1}\Big(-(\lambda +k \mu)p_{n,k}(t)+k\mu p_{n+1,1}(t) +\lambda\sum_{m=1}^{n-1}c_{m}p_{n-m,k}(t)+\lambda  c_{n}p_{0}(t)\Big) \\
				&\ \ +\sum_{n=l+1}^{\infty} x^{nk+1}\Big(-(\lambda +k \mu)p_{n,k}(t)+k\mu p_{n+1,1}(t)+\lambda\sum_{m=1}^{l}c_{m}p_{n-m,k}(t)\Big)\\
				&=-(\lambda+k\mu)xG(x,t)+k\mu xp_{0}(t)+k\mu (G(x,t)-p_{0}(t))\\
				&\ \ +\lambda\Big(\sum_{m=1}^{l}c_{m}x^{mk+1}p_{0}(t)+\sum_{n=1}^{\infty}\sum_{s=1}^{k}x^{nk+s+1}\sum_{m=1}^{\text{min}\{n+1,l\}}c_{m}p_{n+1-m,s}(t)\Big) \\
				&=-(\lambda+k\mu)xG(x,t)+k\mu xp_{0}(t)+k\mu (G(x,t)-p_{0}(t))+\lambda\Big(\sum_{m=1}^{l}c_{m}x^{mk+1}p_{0}(t)\\
				&\ \ +c_{1}\sum_{n=1}^{\infty}\sum_{s=1}^{k}x^{nk+s+1}p_{n,s}(t)+c_{2}\sum_{n=2}^{\infty}\sum_{s=1}^{k}x^{nk+s+1}p_{n-1,s}(t)+\dots +c_{l}\sum_{n=l}^{\infty}\sum_{s=1}^{k}x^{nk+s+1}p_{n+1-l,s}(t)\Big)\\
				&=-(\lambda+k\mu)xG(x,t)+k\mu xp_{0}(t)+k\mu (G(x,t)-p_{0}(t))\\
				&\  \ +\lambda\Big(\sum_{m=1}^{l}c_{m}x^{mk+1}p_{0}(t)+ \Big(\sum_{m=1}^{l}c_{m}x^{mk+1}\Big)(G(x,t)-p_{0}(t))\Big)\\
				&=\Big(k\mu(1-x)-\lambda x\Big(1-\sum_{m=1}^{l}c_{m}x^{mk}\Big)\Big)G(x,t)-k\mu(1-x)p_{0}(t)
		\end{align*}}
		with $G(x,0)=1$. This completes the proof.
	\end{proof}
	\begin{remark}
	For $l=1$, the Preposition \ref{prep1} reduces to the corresponding result for $M/E_{k}/1$ queue (see Griffiths {\it et al.} (2006), Eq. (6)).
	\end{remark}
	%	As we know the Laplace transform of a function $f(t)$ is given by 
	%	\begin{equation}
		%		\tilde{f}(z)=\int_{0}^{\infty}e^{-zt}f(t)dt.  \label{3.3}
		%	\end{equation} 
	Let $\tilde{f}$ denote the Laplace transform of the function $f$. On taking the Laplace transform on both sides of (\ref{3.2}), we get
	\begin{equation}
		x(z\tilde{G}(x,z)-1)=\Big(k\mu(1-x)-\lambda x\Big(1-\sum_{m=1}^{l}c_{m}x^{mk}\Big)\Big)\tilde{G}(x,z)-k\mu(1-x)\tilde{p_{0}}(z).\nonumber\\ 
	\end{equation}
	Thus,
	\begin{equation*}
		\tilde{G}(x,z)=\frac{x-k\mu(1-x)\tilde{p_{0}}(z)}{(\lambda + k \mu +z)x-k \mu - \lambda x\sum_{m=1}^{l}c_{m}x^{mk}}, \, z>0.  
	\end{equation*}
	
	Let $f(x)=(\lambda+k\mu+z)x$ and $g(x)=-k\mu-\lambda x\sum_{m=1}^{l}c_{m}x^{mk}$. Note that $|g(x)|\leq |f(x)|$ on $|x|=1$. By Rouche's theorem, $p(x)=f(x)+g(x)$ has a single zero in $|x|\leq1$, say at $\theta$. As $\tilde{G}(x,z)$ converges for $|x| \leq 1$, we have  $\theta-k\mu(1-\theta)\tilde{p_{0}}(z)=0$. So,
	
	%The function $G^{*}(y,z)$ must converge everywhere within and on $|y|=1$ for $R(z)\geq 0$ thus since by Rouche's theorem, denominator of RHS of (3.5) has a single simple zero at say $\theta$ it follows that the numerator also vanishes for $y=\theta$, and hence we get,
	\begin{equation}\label{3.5}
		\tilde{p_{0}}(z)=\frac{\theta}{k\mu(1-\theta)}.
	\end{equation}
	
	Let $\delta = k\mu/(\lambda + k\mu +z)$, $\omega=\lambda/(\lambda + k\mu + z)$ and $\phi(x)=x\sum_{m=1}^{l}c_{m}x^{mk}$. The Lagrange's expansion of an arbitrary holomorphic function $F(\theta)$ can be written as 
	\begin{equation}
		F(\theta)=F(\delta)+\sum_{n=1}^{\infty} \frac{\omega^{n}}{n!}\frac{\mathrm{d}^{n-1}}{\mathrm{d} \delta^{n-1}}(F'(\delta) \{\phi(\delta)\}^{n}).  \label{3.9}
	\end{equation}	
	Let 
	\begin{equation}\label{3.10}
		F(\theta)=\theta^{\sigma},\, \sigma \in \mathbb{N}.
	\end{equation}		
	%	\begin{align}
		%		F'(\delta) \{\phi(\delta)\}^{n}&=\sigma\delta^{\sigma+n-1}(c_{1}\delta^{k}+c_{2}\delta^{2k}+\dots+c_{l}\delta^{lk})^{n}\nonumber \\
		%		&=\sigma\sum_{m_{1}+m_{2}+\dots+m_{l}=n}\frac{n!}{m_{1}!m_{2}!\dots m_{l}!}c_{1}^{m_{1}}c_{2}^{m_{2}}\dots c_{l}^{m_{l}}\delta^{\sigma+n-1+k(m_{1}+2m_{2}+\dots+lm_{l})}.
		%	\end{align}	
	So,			
	\begin{align}\label{derrivative}
		F'(\delta) \{\phi(\delta)\}^{n}&=\sigma\delta^{\sigma+n-1}(c_{1}\delta^{k}+c_{2}\delta^{2k}+\dots+c_{l}\delta^{lk})^{n}\nonumber \\
		&=\sigma\sum_{\substack{\sum_{j=1}^{l}m_{j}=n \\ m_{j}\in \mathbb{N}_{0}}}{n!}{\Big(\prod_{i=1}^{l}\frac{c_{i}^{m_{i}}}{m_{i}!}}\Big)\delta^{\sigma+n-1+k\sum_{j=1}^{l}jm_{j}}, 
	\end{align}
	where $\mathbb{N}_{0}=\mathbb{N}\cup\{0\}$.	On taking $(n-1)$th derivative in (\ref{derrivative}), we get			
	\begin{equation}\label{3.12}
		\frac{\mathrm{d}^{n-1}}{\mathrm{d} \delta^{n-1}}(F'(\delta) \{\phi(\delta)\}^{n})=
		\sigma\sum_{\substack{\sum_{j=1}^{l}m_{j}=n \\ m_{j}\in \mathbb{N}_{0}}}{n!}{\Big(\prod_{i=1}^{l}\frac{c_{i}^{m_{i}}}{m_{i}!}}\Big)\frac{(\sigma+n-1+k\sum_{j=1}^{l}jm_{j})!}{(\sigma+k\sum_{j=1}^{l}jm_{j})!}\delta^{\sigma+k\sum_{j=1}^{l}jm_{j}}.  
	\end{equation}		
	By substituting (\ref{3.10}) and (\ref{3.12}) in (\ref{3.9}), we obtain
	%\begin{align}
	%	\theta^{\sigma}=
	%	&\delta^{\sigma} +\sigma\sum_{n=1}^{\infty}\frac{\eta^{n}}{n!}\sum_{m_{1}+m_{2}+\dots+m_{l}=n}\frac{n!}{m_{1}!m_{2}!\dots m_{l}!}c_{1}^{m_{1}}c_{2}^{m_{2}}\dots c_{l}^{m_{l}}\nonumber\\
	%	&\frac{(\sigma+n-1+k(m_{1}+2m_{2}+\dots+lm_{l}))!}{(\sigma+k(m_{1}+2m_{2}+\dots+lm_{l}))!}\delta^{\sigma+n+k(m_{1}+2m_{2}+\dots+lm_{l})},
	%\end{align}
	\begin{equation}\label{3.13}
		\theta^{\sigma}=
		\delta^{\sigma} +\sigma\sum_{n=1}^{\infty}\Big(\frac{\lambda}{k\mu}\Big)^{n}\sum_{\substack{\sum_{j=1}^{l}m_{j}=n \\ m_{j}\in \mathbb{N}_{0}}}\Big({\prod_{i=1}^{l}\frac{c_{i}^{m_{i}}}{m_{i}!}}\Big)\frac{(\sigma+n-1+k\sum_{j=1}^{l}jm_{j})!}{(\sigma+k\sum_{j=1}^{l}jm_{j})!}\delta^{\sigma+n+k\sum_{j=1}^{l}jm_{j}}. 
	\end{equation}		
	Let $\eta=\lambda/k\mu$ and $\tau=k\mu t$. On taking the inverse Laplace transform in (\ref{3.13}), we get
	\begin{equation} \label{3.14}
		\mathbb{L}^{-1}(\theta^{\sigma})=\frac{k\mu\sigma}{\tau}\Big(\frac{\tau^{\sigma}}{\sigma!} +\sum_{n=1}^{\infty}\eta^{n}\sum_{\substack{\sum_{j=1}^{l}m_{j}=n \\ m_{j}\in \mathbb{N}_{0}}}\Big({\prod_{i=1}^{l}\frac{c_{i}^{m_{i}}}{m_{i}!}}\Big)\frac{\tau^{\sigma+n+k\sum_{j=1}^{l}jm_{j}}}{(\sigma+k\sum_{j=1}^{l}jm_{j})!}\Big)e^{-(1+\eta)\tau}. 
	\end{equation}
	From (\ref{3.5}), we have
	\begin{equation}\label{lpp0}
		\tilde{p_{0}}(z)=\frac{1}{k\mu}\sum_{h=1}^{\infty}\theta^{h}.
	\end{equation}	
	Finally, on taking the inverse Laplace transform in (\ref{lpp0}) and using (\ref{3.14}), we obtain the zero {\it state-phase} probability of $\{Q(t)\}_{t\geq0}$ in the following form: 
	\begin{equation}\label{3.15}
		p_{0}(t)=\frac{1}{\tau}\sum_{h=1}^{\infty}h\sum_{n=0}^{\infty}\eta^{n}\sum_{\substack{\sum_{j=1}^{l}m_{j}=n \\ m_{j}\in \mathbb{N}_{0}}}{\Big(\prod_{i=1}^{l}\frac{c_{i}^{m_{i}}}{m_{i}!}}\Big)\frac{\tau^{h+n+k\sum_{j=1}^{l}jm_{j}}}{(h+k\sum_{j=1}^{l}jm_{j})!}e^{-(1+\eta)\tau},\,t\geq0.
	\end{equation}
	
	%   Recall Bessel function,
	%	\begin{align}
		%	I^{k}_{\sigma}(r)=\sum_{n=0}^{\infty}\frac{(r/2)^{\sigma+n(k+1)}}{n!(\sigma+nk)!}
		%	\end{align}
	%	Using above Bessel function we can write $p_{0}(t)$ in closed form as follows:
	%	\begin{align*}
		%	p_{0}(t)=&2
		%	\sum_{h=1}^{\infty} \sum_{r=0}^{h-1}(-1)^{h-1-r} 
		%	\binom{h-1}{r}(\eta(1-c_1))^{h-1-r}\\
		%	&\cdot(k(h-1-r)+h)\eta^{-\frac{h+k(h-1-r)-1}{k+1}}\frac{I^{k}_{k(h-1-r)+h}(d)}{d}e^{-(1+\eta)\tau},
		%	\end{align*}
	%	where, $d=2\eta^{\frac{1}{k+1}}\tau$.
	
	\begin{remark}
		For $l=1$, the result in (\ref{3.15}) reduces to that for $M/E_{k}/1$ queue (see Griffiths {\it et al.} (2006), Eq. (18)).
	\end{remark}
	\begin{lemma}\label{lemma}
		Let $f$ be an arbitrary function. Then, the following identity holds:
		\begin{equation}\label{identt}
			\sum_{\substack{m_{j}\geq0\\1\leq j\leq l}}\sum_{r=0}^{\infty}f(m_{1},m_{2}\dots,m_{l},r)=\sum_{\substack{m_{j}\geq0\\1\leq j\leq l}}\sum_{r=0}^{\infty}\sum_{w=1}^{k}f(m_{1},m_{2},\dots,m_{l},rk+w-1),
		\end{equation}
		provided the series in \eqref{identt} converge absolutely.
	\end{lemma}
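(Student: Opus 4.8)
The plan is to observe that the two sides of \eqref{identt} differ only in how the \emph{last} argument of $f$ is indexed: the left-hand side sums this argument over $\mathbb{N}_{0}$, while the right-hand side sums over all pairs $(r,w)$ with $r\in\mathbb{N}_{0}$ and $1\le w\le k$ after setting that argument equal to $rk+w-1$. Thus it suffices to show that the map
\[
\beta\colon \mathbb{N}_{0}^{\,l}\times\mathbb{N}_{0}\times\{1,2,\dots,k\}\longrightarrow \mathbb{N}_{0}^{\,l}\times\mathbb{N}_{0},\qquad
\beta(m_{1},\dots,m_{l},r,w)=(m_{1},\dots,m_{l},rk+w-1),
\]
is a bijection, and then to invoke the invariance of an absolutely convergent series over a countable index set under bijective reindexing (equivalently, the rearrangement theorem together with Fubini for series).

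First I would prove the bijectivity of $\beta$, which reduces to showing that $(r,w)\mapsto rk+w-1$ is a bijection of $\mathbb{N}_{0}\times\{1,\dots,k\}$ onto $\mathbb{N}_{0}$. Given $n\in\mathbb{N}_{0}$, the division algorithm yields unique $r\in\mathbb{N}_{0}$ and $j\in\{0,1,\dots,k-1\}$ with $n=rk+j$; setting $w=j+1\in\{1,\dots,k\}$ produces the unique preimage $(r,w)$. Equivalently, as $w$ ranges over $\{1,\dots,k\}$ the integers $rk+w-1$ exhaust the block $\{rk,rk+1,\dots,rk+k-1\}$, and these blocks over $r\ge 0$ partition $\mathbb{N}_{0}$; the $m_{j}$-coordinates are untouched, so $\beta$ is a bijection.

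Next, writing $L$ for the left-hand side of \eqref{identt}, absolute convergence lets us regard $L$ as an unordered sum of $f(m_{1},\dots,m_{l},n)$ over the countable set $\mathbb{N}_{0}^{\,l}\times\mathbb{N}_{0}$; since $f(\beta(m_{1},\dots,m_{l},r,w))=f(m_{1},\dots,m_{l},rk+w-1)$, the rearrangement theorem gives $L=\sum f(m_{1},\dots,m_{l},rk+w-1)$ as an unordered sum over $\mathbb{N}_{0}^{\,l}\times\mathbb{N}_{0}\times\{1,\dots,k\}$, and because $\{1,\dots,k\}$ is finite this equals the iterated sum $\sum_{m_{j}\ge 0}\sum_{r=0}^{\infty}\sum_{w=1}^{k} f(m_{1},\dots,m_{l},rk+w-1)$, i.e.\ the right-hand side of \eqref{identt} (the same reasoning recovers $L=\sum_{m_{j}\ge 0}\sum_{r=0}^{\infty} f(m_{1},\dots,m_{l},r)$). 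There is no substantive obstacle here; the one point that must not be skipped — and the reason the absolute-convergence hypothesis is included in the statement — is that the reindexing $n\leftrightarrow(r,w)$ together with the interchange of the $w$- and $r$-summations is legitimate only because all the series involved converge absolutely, so that every rearrangement converges to the common value.
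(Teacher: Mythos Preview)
Your proof is correct and follows essentially the same idea as the paper's own argument: both rest on the observation that the map $(r,w)\mapsto rk+w-1$ gives a bijection from $\mathbb{N}_{0}\times\{1,\dots,k\}$ onto $\mathbb{N}_{0}$, so that the inner double sum on the right reindexes the inner single sum on the left. The paper carries this out by direct expansion---writing out the terms $f(\dots,rk),f(\dots,rk+1),\dots,f(\dots,(r+1)k-1)$ for each $r$ and observing that these blocks exhaust $\mathbb{N}_{0}$---whereas you phrase the same block decomposition via the division algorithm and then invoke the rearrangement theorem; your version is somewhat more explicit about why the absolute-convergence hypothesis is needed, but the underlying argument is the same.
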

	\begin{proof}
		From right hand side of \eqref{identt}, we have
		{\footnotesize	\begin{align*}
				\sum_{\substack{m_{j}\geq0\\1\leq j\leq l}}\sum_{r=0}^{\infty}\sum_{w=1}^{k}f(m_{1},m_{2},\dots,m_{l},rk+w-1)&=\sum_{\substack{m_{j}\geq0\\1\leq j\leq l}}\sum_{r=0}^{\infty}\Big(f(m_{1},m_{2},\dots,m_{l},rk)+f(m_{1},m_{2},\dots,m_{l},rk+1)\\
				&\ \ +\dots+f(m_{1},m_{2},\dots,m_{l},(r+1)k-1)\Big)\\
				&=\sum_{\substack{m_{j}\geq0\\1\leq j\leq l}}\Big(f(m_{1},\dots,m_{l},0)+f(m_{1},\dots,m_{l},1)\\
				&\ \ +\dots+f(m_{1},\dots,m_{l},k-1)+f(m_{1},\dots,m_{l},k)\\
				&\ \ +f(m_{1},\dots,m_{l},k+1)+\dots+f(m_{1},\dots,m_{l},2k-1)+\dots\Big)\\
				&=\sum_{\substack{m_{j}\geq0\\1\leq j\leq l}}\sum_{r=0}^{\infty}f(m_{1},m_{2},\dots,m_{l},r),
		\end{align*}}
		which is equal to the left hand side of \eqref{identt}.
	\end{proof}
	\begin{remark}
		For $l=1$, Lemma \ref{lemma} reduces to the result given in Eq. (25) of Griffiths {\it et al.} (2006).
	\end{remark}
	\begin{theorem}\label{t3.2}
		For $t\geq0$ and $1\leq s\leq k-1$, the transient {\it state-phase} probabilities are given by
		\begin{align}\label{pnst}
			p_{n,s}(t)
			&=e^{-(\lambda+k\mu)t}\sum_{\substack{m_{j},r\geq0\\\sum_{j=1}^{l}jm_{j} -r=n}}\Big(\prod_{i=1}^{l}\frac{c_{i}^{m_{i}}}{m_{i}!}\Big)\Bigg(\frac{\lambda^{\sum_{j=1}^{l}m_{j}}(k\mu )^{k(r+1)-s}}{(k(r+1)-s)!} \Big(t^{\sum_{j=1}^{l}m_{j} +k(r+1)-s} \nonumber \\
			& \hspace{1cm} +k\mu\int_{0}^{t}p_{0}(z)e^{(\lambda+k\mu)z}(t-z)^{\sum_{j=1}^{l}m_{j}+k(r+1)-s}\mathrm{d}z\Big) -\frac{\lambda^{\sum_{j=1}^{l}m_{j}}(k\mu )^{k(r+1)-s}}{(k(r+1)-s-1)!}\nonumber\\
			& \hspace{4cm} \cdot\int_{0}^{t}p_{0}(z)e^{(\lambda+k\mu)z}(t-z)^{\sum_{j=1}^{l}m_{j}+k(r+1)-s-1}\mathrm{d}z\Bigg), n\geq1
		\end{align}
		and for $s=k$, these are given by
	\begin{align}
			p_{n,k}(t)	&=e^{-(\lambda+k\mu)t}\sum_{\substack{m_{j},r\geq0\\\sum_{j=1}^{l}jm_{j}-r=n}}\Bigg(\Big(\prod_{i=1}^{l}\frac{c_{i}^{m_{i}}}{m_{i}!}\Big)\frac{\lambda^{\sum_{j=1}^{l}m_{j}}(k\mu )^{rk}}{(rk)!} \Big(t^{\sum_{j=1}^{l}m_{j}+rk}\nonumber\\
			&\ \ +k\mu\int_{0}^{t}p_{0}(z)e^{(\lambda+k\mu)z}(t-z)^{\sum_{j=1}^{l}m_{j}+rk}\mathrm{d}z\Big)-\Big(\frac{c_{1}^{m_{1}+1}}{(m_{1}+1)!}\Big)\Big(\prod_{i=2}^{l}\frac{c_{i}^{m_{i}}}{m_{i}!}\Big)\nonumber\\
			&\hspace{0.5cm} \cdot\frac{\lambda^{\sum_{j=1}^{l}m_{j}+1}(k\mu )^{(r+1)k}}{((r+1)k-1)!} \int_{0}^{t}p_{0}(z)e^{(\lambda+k\mu)z}(t-z)^{\sum_{j=1}^{l}m_{j}+k(r+1)}\mathrm{d}z\Bigg),\, n\geq1.\label{pnkt}
		\end{align}
	\end{theorem}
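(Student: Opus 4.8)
The plan is to extract $p_{n,s}(t)$ from the Laplace transform $\tilde G(x,z)$ by a double extraction: first pull off the coefficient of the appropriate power of $x$ to isolate $\tilde p_{n,s}(z)$, then invert the Laplace transform in $z$. Recall from the discussion preceding the statement that
\[
\tilde G(x,z)=\frac{x-k\mu(1-x)\tilde p_0(z)}{(\lambda+k\mu+z)x-k\mu-\lambda x\sum_{m=1}^{l}c_m x^{mk}}
=\frac{x-k\mu(1-x)\tilde p_0(z)}{(\lambda+k\mu+z)\bigl(x-\delta-\omega\,\phi(x)\bigr)},
\]
with $\delta=k\mu/(\lambda+k\mu+z)$, $\omega=\lambda/(\lambda+k\mu+z)$, $\phi(x)=x\sum_{m=1}^l c_m x^{mk}$. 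First I would expand the factor $\bigl(x-\delta-\omega\phi(x)\bigr)^{-1}=\delta^{-1}\sum_{r\ge0}\bigl(1+(\omega/\delta)\phi(x)/\cdots\bigr)$ — more precisely, write $\tilde G$ as a geometric-type series in $\phi(x)$, using that $x^{-1}\bigl(1-\delta/x-(\omega/x)\phi(x)\bigr)^{-1}=\sum_{j\ge0}x^{-j-1}\bigl(\delta+\omega\phi(x)\bigr)^{j}$ after clearing the leading $x$, then apply the multinomial theorem to $\bigl(\delta+\omega\phi(x)\bigr)^j=\bigl(\delta+\omega\sum_m c_m x^{mk+1}\bigr)^j$. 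Collecting the exponent of $x$ against the index $(n-1)k+s$ from the definition \eqref{3.2a} of $G$ forces the Diophantine relation appearing in \eqref{pnst}–\eqref{pnkt}, namely $\sum_{j=1}^l j m_j - r = n$ together with a residue in $\{1,\dots,k\}$ (here Lemma \ref{lemma} is exactly the bookkeeping device that lets one split the single sum over the exponent into a sum over $r\ge0$ and $w\in\{1,\dots,k\}$, matching the phase index $s$). The numerator $x-k\mu(1-x)\tilde p_0(z)$ contributes two pieces — the bare $x$ and the $\tilde p_0(z)$-weighted part — which is the source of the two terms (the pure power of $t$ versus the convolution integrals against $p_0$) in the final formulas.

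After the coefficient extraction one is left, for each admissible tuple $(m_1,\dots,m_l,r)$, with $\tilde p_{n,s}(z)$ expressed as a sum of terms of the shape $\delta^{a}\omega^{b}$ and $\tilde p_0(z)\,\delta^{a}\omega^{b}$, i.e. rational functions $\dfrac{(k\mu)^{a}\lambda^{b}}{(\lambda+k\mu+z)^{a+b}}$ possibly multiplied by $\tilde p_0(z)$. The second step is to invert these in $z$: the elementary identity $\mathbb L^{-1}\!\bigl((z+\lambda+k\mu)^{-N}\bigr)=t^{N-1}e^{-(\lambda+k\mu)t}/(N-1)!$ handles the $\tilde p_0$-free terms and produces the explicit monomials $t^{\sum m_j + k(r+1)-s}$ etc., while for the $\tilde p_0(z)$-weighted terms the convolution theorem gives precisely the integrals $\int_0^t p_0(z)e^{(\lambda+k\mu)z}(t-z)^{\cdots}\,\mathrm dz$ after factoring the common $e^{-(\lambda+k\mu)t}$. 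For $s=k$ the exponent bookkeeping is slightly different — the $x^{k+1}p_{1,k}$ term and the shift by a full block of $k$ phases change which power of $\delta$ versus $\omega$ accompanies each $m_j$, and one of the $c_1$ factors gets promoted to $c_1^{m_1+1}/(m_1+1)!$; this accounts for the asymmetry between \eqref{pnst} and \eqref{pnkt}, and I would treat it as a separate (parallel) computation rather than a corollary of the $s<k$ case.

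The main obstacle I anticipate is purely combinatorial rather than analytic: organizing the coefficient-of-$x^{(n-1)k+s}$ extraction from the multinomial expansion of $\bigl(\delta+\omega\sum_m c_m x^{mk+1}\bigr)^j$ so that the summation indices land exactly in the form written, and in particular justifying the interchange of the (now multiply-indexed) infinite sums — absolute convergence for $|x|\le 1$, $z>0$ — which is where the hypothesis in Lemma \ref{lemma} about absolute convergence is invoked. I would also need to verify that the $r=0,\ s=k$ boundary case (where $k(r+1)-s=0$ and one of the factorials in \eqref{pnst} becomes $(-1)!$) is consistently handled by the $s=k$ formula \eqref{pnkt} rather than by the $1\le s\le k-1$ one, and that the initial conditions $p_{n,s}(0)=0$ and the reduction to $p_0(t)$ of \eqref{3.15} are recovered — these serve as the natural consistency checks at the end of the argument.
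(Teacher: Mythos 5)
Your strategy is sound and, if carried out, would reproduce \eqref{pnst}--\eqref{pnkt}, but it is a genuinely different route from the paper's. The paper never returns to the Laplace domain for this theorem: it rewrites \eqref{3.2} as $\frac{\partial}{\partial t}G(x,t)=H(x)G(x,t)-k\mu\frac{1-x}{x}p_{0}(t)$ with $H(x)=\lambda\sum_{m=1}^{l}c_{m}x^{mk}+\frac{k\mu}{x}-(\lambda+k\mu)$, solves this linear equation in $t$ by an integrating factor, so that the convolution integrals $\int_{0}^{t}p_{0}(z)e^{(\lambda+k\mu)z}(t-z)^{\cdots}\,\mathrm{d}z$ arise directly from variation of constants (no convolution theorem needed), then expands $e^{(\lambda\sum_{m}c_{m}x^{mk}+k\mu/x)y}$ as a product of exponential series, uses Lemma \ref{lemma} to split the $k\mu/x$ index into blocks of length $k$ (this is where the phase label $s$ is created), and compares coefficients of $x^{k(\sum_{j}jm_{j}-r-1)+s}$ with \eqref{3.2a}. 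Your route instead expands $\big(x-\delta-\omega\phi(x)\big)^{-1}=\sum_{j\geq0}(\delta+\omega\phi(x))^{j}x^{-j-1}$ in $\tilde{G}(x,z)$, extracts the coefficient of $x^{k(n-1)+s}$ and inverts termwise; this does work: for a term with $a$ factors of $\delta$ and $b=\sum_{j}m_{j}$ factors of $\omega$, the multinomial weight $(a+b)!/(a!\,m_{1}!\cdots m_{l}!)$ combines with $\mathbb{L}^{-1}\big((z+\lambda+k\mu)^{-(a+b+1)}\big)=t^{a+b}e^{-(\lambda+k\mu)t}/(a+b)!$ to leave exactly the coefficient pattern of \eqref{pnst} with $a=k(r+1)-s$, and the two numerator pieces $x$ and $-k\mu(1-x)\tilde{p_{0}}(z)$ produce, respectively, the bare monomial and the two $p_{0}$-convolutions. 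What your approach buys is the explicit $\tilde{p}_{n,s}(z)$ as a byproduct (which the paper essentially has to recompute in Section \ref{sec4}) and closer alignment with the Laplace methodology already used for $p_{0}$; what it costs are the two technical points you only gesture at: (i) your geometric expansion converges only where $|\delta+\omega\phi(x)|<|x|$, i.e.\ in an annulus near $|x|=1$ (since $\delta+\omega<1$ for $z>0$) and not near $x=0$, so identifying its coefficients with the Taylor coefficients of \eqref{3.2a} requires the remark that $\tilde{G}(\cdot,z)$ is analytic on $|x|\leq1$, hence its Laurent expansion on that annulus has no negative powers and coincides with the Taylor expansion; and (ii) termwise inversion of the resulting multiply-indexed series needs an absolute-convergence/Fubini justification, whereas the paper only rearranges products of entire exponential series. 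Your reading of the $s=k$ asymmetry is correct and matches the paper: the second integral term is reindexed via $m_{1}\mapsto m_{1}+1$, which produces the $c_{1}^{m_{1}+1}/(m_{1}+1)!$ factor and is precisely how the $(-1)!$ that \eqref{pnst} would give at $k(r+1)-s=0$ is avoided.
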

	\begin{proof}
		From (\ref{3.2}), we have 
		\begin{equation}\label{3.16}
			\frac{\partial}{\partial t}G(x,t)=H(x)G(x,t)-k\mu \left( \frac{1-x}{x} \right)p_{0}(t),\, G(x,0)=1,   			 	 
		\end{equation}
		where
		\begin{equation}\label{3.17}
			H(x)
			=\lambda\sum_{m=1}^{l}c_{m}x^{mk}+\frac{k\mu}{x}-(\lambda+k\mu).  
		\end{equation}
		Note that the solution of (\ref{3.16}) can be written in the following form:
		\begin{equation}\label{3.18}
			G(x,t)=A(t)e^{H(x)t},  
		\end{equation}
		with $A(0)=1$. Thus, we have
		\begin{equation}\label{A't}
			A'(t)=-k \mu \Big(\frac{1-x}{x}\Big)p_{0}(t)e^{-H(x)t}.
		\end{equation}
		On integrating (\ref{A't}) from 0 to $t$, we get
		\begin{equation}\label{A(t)}
			A(t)=A(0)-k \mu \Big(\frac{1-x}{x}\Big) \int_{0}^{t}p_{0}(z)e^{-H(x)z}\mathrm{d}z.
		\end{equation} 
		By substituting (\ref{3.17}) and (\ref{A(t)}) in (\ref{3.18}), we obtain
		\begin{align}
			G(x,t)
			&=e^{-(\lambda+k \mu)t}\Big(e^{\big(\lambda\sum_{m=1}^{l}c_{m}x^{mk}+\frac{k\mu}{x}\big)t}-k\mu \Big(\frac{1-x}{x}\Big) \nonumber \\ &\hspace{5.4cm}\,\cdot\int_{0}^{t}p_{0}(z)e^{(\lambda + k \mu)z}e^{\big(\lambda \sum_{m=1}^{l}c_{m}x^{mk}+\frac{k\mu}{x}\big)(t-z)}\mathrm{d}z\Big).  \label{3.19}
		\end{align}
		
		For $y\geq0$, we have
		\begin{align}
			e^{(\lambda\sum_{m=1}^{l}c_{m}x^{mk}+\frac{k\mu}{x})y}\nonumber
			&=e^{\frac{yk\mu}{x}}\prod_{i=1}^{l}e^{\lambda y c_{i}x^{ik}}\nonumber\\
			&=\sum_{\substack{m_{j}\geq0\\1\leq j\leq l}}\sum_{r=0}^{\infty}\Big(\prod_{i=1}^{l}\frac{(\lambda y c_{i}x^{ik})^{m_{i}}}{m_{i}!}\Big){\frac{(yk\mu/x)}{r!}}^{r}\nonumber\\
			&=\sum_{\substack{m_{j}\geq0\\1\leq j\leq l}}\sum_{r=0}^{\infty}\sum_{w=1}^{k}\Big(\prod_{i=1}^{l}\frac{c_{i}^{m_{i}}}{m_{i}!}\Big)\frac{\lambda^{\sum_{j=1}^{l}m_{j}}(k\mu )^{rk+w-1}}{(rk+w-1)!}\nonumber\\
			&\hspace{2cm}\cdot y^{\sum_{j=1}^{l}m_{j}+rk+w-1}x^{k(\sum_{j=1}^{l}jm_{j}-r)-w+1}\nonumber,\,\,(\text{by using Lemma \ref{lemma}})\\
			&=\sum_{\substack{m_{j}\geq0\\1\leq j\leq l}}\sum_{r=0}^{\infty}\sum_{s=1}^{k}\Big(\prod_{i=1}^{l}\frac{c_{i}^{m_{i}}}{m_{i}!}\Big)\frac{\lambda^{\sum_{j=1}^{l}m_{j}}(k\mu )^{k(r+1)-s}}{(k(r+1)-s)!}
			\nonumber\\
			&\hspace{4.5cm}\cdot y^{\sum_{j=1}^{l}m_{j}+k(r+1)-s}x^{k(\sum_{j=1}^{l}jm_{j}-r-1)+s},\label{exp}
		\end{align}
		where the last step follows from the change of variable $s=k-w+1$.
		
		By using (\ref{exp}) in (\ref{3.19}), we have
		{\footnotesize \begin{align}\label{gxt}
				G(x,t)
				&=
				%e^{-(\lambda+k\mu)t}\Big(e^{t(\lambda\sum_{m=1}^{l}c_{m}x^{mk}+\frac{k\mu}{x})}+k\mu\int_{0}^{t}p_{0}(z)e^{(\lambda+k\mu)z}e^{(t-z)(\lambda\sum_{m=1}^{l}c_{m}x^{mk}+\frac{k\mu}{x})}\mathrm{d}z\\
				%&\ \ %-\frac{k\mu}{x}\int_{0}^{t}p_{0}(z)e^{(\lambda+k\mu)z}e^{(t-z)(\lambda\sum_{m=1}^{l}c_{m}x^{mk}+\frac{k\mu}{x})}\mathrm{d}z\Big)\\
				%e^{-(\lambda+k\mu)t}\Big(\sum_{u=-\infty}^{\infty}\sum_{s=1}^{k}\sum_{\substack{m_{j}\geq0\\2\leq j\leq l}}\sum_{r=0}^{\infty}(\lambda t)^{u+r+\sum_{j=2}^{l}m_{j}}\frac{c_{1}^{u+r}}{(u+r)!}\Big(\prod_{i=2}^{l}\frac{c_{i}^{m_{i}}}{m_{1}!}\Big)\frac{(k\mu t)^{k(r+1)-s}}{(k(r+1)-s)!}\\
				%&\ \ \cdot x^{k(u-1+\sum_{j=2}^{l}jm_{j})+s}+k\mu\int_{0}^{t}p_{0}(z)e^{(\lambda+k\mu)z}\sum_{u=-\infty}^{\infty}\sum_{s=1}^{k}\sum_{\substack{m_{j}\geq0\\2\leq j\leq l}}\sum_{r=0}^{\infty}(\lambda (t-z))^{u+r+\sum_{j=2}^{l}m_{j}}\\
				%&\ \ \cdot \frac{c_{1}^{u+r}}{(u+r)!}\Big(\prod_{i=2}^{l}\frac{c_{i}^{m_{i}}}{m_{i}!}\Big)\frac{(k\mu (t-z))^{k(r+1)-s}}{(k(r+1)-s)!}\mathrm{d}z x^{k(u-1+\sum_{j=2}^{l}jm_{j})+s}-k\mu\int_{0}^{t}p_{0}(z)e^{(\lambda+k\mu)z}\\
				%&\ \ \cdot \sum_{u=-\infty}^{\infty}\sum_{s=1}^{k}\sum_{\substack{m_{j}\geq0\\2\leq j\leq l}}\sum_{r=0}^{\infty}(\lambda (t-z))^{u+r+\sum_{j=2}^{l}m_{j}}\frac{c_{1}^{u+r}}{(u+r)!}\Big(\prod_{i=2}^{l}\frac{c_{i}^{m_{i}}}{m_{i}!}\Big)\frac{(k\mu (t-z))^{k(r+1)-s}}{(k(r+1)-s)!}\mathrm{d}z\\
				%& \ \ \cdot x^{k(u-1+\sum_{j=2}^{l}jm_{j})+s-1}\Big)\\
				e^{-(\lambda+k\mu)t}\sum_{\substack{m_{j}\geq0\\1\leq j\leq l}}\sum_{r=0}^{\infty}\sum_{s=1}^{k}\Big(\prod_{i=1}^{l}\frac{c_{i}^{m_{i}}}{m_{i}!}\Big)\frac{\lambda^{\sum_{j=1}^{l}m_{j}}(k\mu )^{k(r+1)-s}}{(k(r+1)-s)!}\bigg(\Big(t^{\sum_{j=1}^{l}m_{j}+k(r+1)-s}\nonumber\\
				&\hspace{1.5cm} +k\mu\int_{0}^{t}p_{0}(z)e^{(\lambda+k\mu)z}(t-z)^{\sum_{j=1}^{l}m_{j}+k(r+1)-s}\mathrm{d}z\Big)x^{k(\sum_{j=1}^{l}jm_{j}-r-1)+s}\nonumber\\
				&\hspace{2cm}  -k\mu\int_{0}^{t}p_{0}(z)e^{(\lambda+k\mu)z}(t-z)^{\sum_{j=1}^{l}m_{j}+k(r+1)-s}\mathrm{d}z x^{k(\sum_{j=1}^{l}jm_{j}-r-1)+s-1}\bigg).
		\end{align}}
		
		Finally, the required result follows by comparing the coefficients of $x^{k(\sum_{j=1}^{l}jm_{j}-r-1)+s}$ in \eqref{3.2a} and (\ref{gxt}).
	\end{proof}
	\begin{remark}
		For $l=1$, the transient {\it state-phase} probabilities \eqref{pnst} and \eqref{pnkt} reduces to that of $M/E_{k}/1$ queue (see Ascione {\it{et al.}} (2020), Eq. (6) and Eq. (7)).
	\end{remark}
	\subsection{Queue length process}\label{subsec3.1}
	    Here, we give an equivalent representation of the Erlang queue with multiple arrivals in terms of a queue length process. 
	    
	    Let us consider the following  bijective map $g_{k}:\mathcal{H}_{0}\rightarrow\mathbb{N}_{0}$ as described in Ascione {\it{et al.}} (2020), p. 3252:  
	    \begin{equation}\label{gk}
	    	g_{k}(n,s)=\left\{
	    	\begin{array}{ll}
	    		k(n-1)+s,\, (n,s)\in\mathcal{H},\\
	    		0,\, (n,s)=(0,0).
	    	\end{array}
	    	\right.  
	    \end{equation}
	   Its the inverse map is given by $(a_{k}(m),b_{k}(m))$, where 
	   \begin{equation*}
	   	b_{k}(m)=\left\{
	   	\begin{array}{ll}
	   		\min\{s>0: s \equiv m \pmod{k} \},\, m>0,\\
	   		0,\, m=0,
	   	\end{array}
	   	\right.  
	   \end{equation*}
	    and
	   \begin{equation*}
	   	a_{k}(m)=\left\{
	   	\begin{array}{ll}
	   		\frac{m-b_{k}(m)}{k}+1,\, m>0,\\
	   		0,\, m=0.
	   	\end{array}
	   	\right.  
	   \end{equation*}
	    Let us consider the following queue length process: $\mathcal{L}(t)=g_{k}(\mathcal{Q}(t))$, $t\geq0$. 
	    The {\it state-phase} process $\{\mathcal{Q}(t)\}_{t\geq0}$, that is, the Erlang queue with multiple arrivals can be equivalently represented by the queue length process $\{\mathcal{L}(t)\}_{t\geq0}$ in terms of phases as $g_{k}(\cdot)$ is a bijection. So, each state of $\{\mathcal{L}(t)\}_{t\geq0}$   represents a unique state of $\{\mathcal{Q}(t)\}_{t\geq0}$ and vice-versa. 
	    
	    Let $p_{n}(t)=\mathrm{Pr}(\mathcal{L}(t)=n|\mathcal{L}(0)=0)$, $n\geq0$ denote its state probabilities. Then, in an infinitesimal time interval of length $h$ such that $o(h)/h\rightarrow0$ as
	    $h\rightarrow0$, we have 
	    {\small\begin{align*}
	    	p_{0}(t+h)&=p_{0}(t)q^{0}_{a}(0,h)q_{d}^{0}(0,h)+p_{1}(t)q_{a}(0,h)q_{d}(1,h),\\
	    	p_{n}(t+h)&=p_{n}(t)q_{a}(0,h)q_{d}(0,h)+p_{n+1}(t)q_{a}(0,h)q_{d}(1,h)+\sum_{m=1}^{l}p_{n-mk}(t)q_{a}(m,h)q_{d}(0,h), \, n\geq1.
	    \end{align*}}
	    On using (\ref{1.1})-(\ref{1.4}), we get
	    \begin{align*}
	    	\frac{p_{0}(t+h)-p_{0}(t)}{h}=
	    	&-\lambda p_{0}(t)+ k\mu p_{1}(t)+h(-\lambda k\mu p_{1}(t))+\frac{o(h)}{h},\\
	    	\frac{p_{n}(t+h)-p_{n}(t)}{h}=
	    	&-(\lambda+k\mu) p_{n}(t)+ k\mu p_{n+1}(t)+\lambda\sum_{m=1}^{l}c_{m}p_{n-mk}(t)\\
	    	&\ \ +h\Big(\lambda k\mu p_{n}(t)-\lambda k\mu p_{n+1}(t)-\lambda k\mu\sum_{m=1}^{l}c_{m}p_{n-mk}(t)\Big)+\frac{o(h)}{h},\,n\geq1.
	    \end{align*}		
	    On letting $h\rightarrow0$, we get the following system of differential equations that governs the state probabilities of the queue length process $\{\mathcal{L}(t)\}_{t\geq0}$:
	    \begin{align}\label{lengthDE}
	    	\left.
	    	\begin{aligned}
	    		\frac{\mathrm {d}}{\mathrm {d}t}p_{0}(t)
	    		&=-\lambda p_{0}(t)+k\mu p_{1}(t),  \\
	    		\frac{\mathrm {d}}{\mathrm {d}t}p_{n}(t)
	    		&=-(\lambda + k \mu)p_{n}(t)+k\mu p_{n+1}(t)+\lambda\sum_{m=1}^{l}c_{m}p_{n-mk}(t), \, n \geq 1,
	    	\end{aligned}
	    	\right\}
	    \end{align}
	    with $p_{0}(0)=1$.
	    
	     Note that 
	     \begin{equation}\label{gkn}
	     	p_{n}(t)=p_{a_{k}(n),b_{k}(n)}(t),\,n\geq0.
	     \end{equation}
	      So, there is one to one correspondence between the {\it state-phase} probabilities of Erlang queue with multiple arrivals and the state probabilities of its queue length process. Therefore, the system of difference-differential equations in Theorem \ref{t3.1} is concisely represented in \eqref{lengthDE}.
	    \begin{remark}
	    	For $l=1$, the system of differential equations given in \eqref{lengthDE} reduces to that of the Erlang queue (see Ascione {\it{et al.}} (2020), Eq. (11)).
	    \end{remark}
	    \begin{remark}
	    	Luchak (1958) studied the queueing process that is described by the  following system of differential equations:
	    	\begin{align}\label{l58de}
	    		\left.
	    		\begin{aligned}
	    			\frac{\mathrm {d}}{\mathrm {d}t}p_{0}(t)
	    			&=-\lambda p_{0}(t)+k\mu p_{1}(t),  \\
	    			\frac{\mathrm {d}}{\mathrm {d}t}p_{n}(t)
	    			&=-(\lambda + k \mu)p_{n}(t)+k\mu p_{n+1}(t)+\lambda\sum_{m=1}^{n}c'_{m}p_{n-m}(t), \, n \geq 1, 
	    		\end{aligned}
	    		\right\} 
	    	\end{align}
	    	with $p_{0}(0)=1$. 
	    	On choosing 
	    	\begin{equation*}
	    		c'_{m}=
	    		\begin{cases}
	    			\lambda_{i}/\sum_{j=1}^{l}\lambda_{j},\,m=ik,\,i=1,2,\dots,l,\\
	    			0,\, \text{otherwise},
	    		\end{cases}
	    	\end{equation*}
	    	the system of differential equations given in \eqref{l58de} reduces to that of $\{\mathcal{L}(t)\}_{t\geq0}$ in \eqref{lengthDE}.
	    \end{remark}
	    Let $\mathcal{M}(t)=\mathbb{E}(\mathcal{L}(t)|\mathcal{L}(0)=0)$ and $\mathcal{M}_{2}(t)=\mathbb{E}(\mathcal{L}(t)^{2}|\mathcal{L}(0)=0)$ be the 
	    mean queue length and the second moment of $\{\mathcal{L}(t)\}_{t\geq0}$, respectively. 
	As there are no customers at time 0, and using \begin{equation*}
		\sum_{m=1}^{\infty}mc_{m}'=k \sum_{i=1}^{l}ic_{i},
	\end{equation*}
	in Luchak (1956), Eq. (36), it follows that $\mathcal{M}(t)$ is the solution of following Cauchy problem:
	\begin{align}\label{mtde}
		\frac{\mathrm{d}}{\mathrm{d}t}\mathcal{M}(t)&=k\Big(\lambda \sum_{i=1}^{l}ic_{i}-\mu\Big)+k\mu p_{0}(t),
	\end{align}
	with $\mathcal{M}(0)=1$. On solving it, we get
	\begin{equation}\label{m1t}
		\mathcal{M}(t)=k\Big(\lambda \sum_{i=1}^{l}ic_{i}-\mu\Big)t+k\mu\int_{0}^{t}p_{0}(y)\mathrm{d}y.
	\end{equation}
   
   Similarly, from Luchak (1958), Eq. (22), the second moment of the queue length process can be obtained in the following form:
	\begin{align*}
		\mathcal{M}_{2}(t)&=k\Big(\mu+k\lambda\sum_{i=1}^{l}i^{2}c_{i}\Big)t+k^{2}\Big(\lambda\sum_{i=1}^{l}ic_{i}-\mu\Big)^{2}t^{2}\nonumber\\
		&\ \ -2k^{2}\mu \Big(\lambda\sum_{i=1}^{l}ic_{i}-\mu\Big)\int_{0}^{t}yp_{0}(y)\mathrm{d}y+\Big(2k^{2}\mu\Big(\lambda\sum_{i=1}^{l}ic_{i}-\mu\Big)t-k\mu\Big)\int_{0}^{t}p_{0}(y)\mathrm{d}y.
	\end{align*}
	
	\begin{remark}
		For $l=1$, the mean queue length given in \eqref{m1t} get reduced to that of the Erlang queue (Ascione {\it{et al.}} (2020), Eq. (12)). 
	\end{remark}
	\subsection{Busy period}
	The busy period of queue is a time interval starting from the arrival of customer(s) into an initially empty system until the system becomes empty again, where the system includes both the server and the queue.
	\begin{theorem}
		Let $B$ denote the duration of busy period in Erlang queue with multiple arrivals, and let $F_{B}(t)$ represent its distribution function. Then, we have
		{\scriptsize \begin{equation}\label{busy}
			F_{B}(t)=a\sum_{n=0}^{\infty}\lambda^{n}\sum_{\substack{\sum_{j=1}^{l}m_{j}=n\\m_{j}\in\mathbb{N}_{0}}}\Big(\prod_{i=1}^{l}\frac{c_{i}^{m_{i}}}{m_{i}!}\Big)\frac{(k\mu)^{a+k\sum_{j=1}^{l}jm_{j}}}{(a+k\sum_{j=1}^{l}jm_{j})!}\int_{0}^{t}z^{a+n+k\sum_{j=1}^{l}jm_{j}-1}e^{-(\lambda+k\mu)z}\mathrm{d}z,\,t\geq0.
		\end{equation}}
	   \end{theorem}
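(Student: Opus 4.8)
The plan is to identify the busy period $B$ with a first-passage time of the queue-length process $\{\mathcal{L}(t)\}_{t\ge0}$, to show that its Laplace transform is a power of the root $\theta$ introduced in the paragraph preceding \eqref{3.5}, and then to read off the density of $B$ from the inversion formula \eqref{3.14}. When the system is empty, a batch of size $m$ arrives with probability $c_{m}$ and, by \eqref{1.2}, sends $\mathcal{Q}$ to the state $(m,k)$; equivalently, $\mathcal{L}$ jumps from $0$ to $a:=mk$. Thus $B$ is the first time at which $\mathcal{L}$, started at level $a$, returns to $0$. Since the only downward transitions of $\mathcal{L}$ have size one (rate $k\mu$), while its upward transitions (size $mk$, rate $\lambda c_{m}$) are spatially homogeneous on $\{1,2,\dots\}$, any such path must pass through $a-1,a-2,\dots,1$ in turn, and by the strong Markov property the successive times needed to descend one level are independent and identically distributed. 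Hence $B$ has the law of $D_{1}+\dots+D_{a}$, where $D_{1},D_{2},\dots$ are independent copies of the time $D$ it takes $\mathcal{L}$ to move from level $1$ to level $0$.

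Next I would compute $\beta(z):=\mathbb{E}[e^{-zD}]$ by conditioning on the first transition out of level $1$: after an $\mathrm{Exp}(\lambda+k\mu)$ holding time the process either drops to $0$ (with probability $k\mu/(\lambda+k\mu)$) or jumps to $1+mk$ (with probability $\lambda c_{m}/(\lambda+k\mu)$), and in the latter case it must still perform $mk+1$ further independent one-level descents before reaching $0$. This gives
\begin{equation*}
(\lambda+k\mu+z)\,\beta(z)=k\mu+\lambda\sum_{m=1}^{l}c_{m}\,\beta(z)^{mk+1},
\end{equation*}
which is exactly the equation $f(x)+g(x)=0$ that defines $\theta$. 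Since $0<\beta(z)<1$ for every $z>0$ and, by Rouch\'e's theorem, $\theta$ is the only zero of $f+g$ in $\{|x|\le1\}$, we conclude $\beta(z)=\theta(z)$. Therefore the density of $B$ has Laplace transform $\theta^{a}$, and so $F_{B}(t)=\int_{0}^{t}\mathbb{L}^{-1}(\theta^{a})(s)\,\mathrm{d}s$.

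It remains to invert. Taking $\sigma=a$ in \eqref{3.14}, absorbing the leading $\tau^{a}/a!$ term into the series as its $n=0$ term (for which all $m_{j}=0$), and substituting $\tau=k\mu s$, $\eta=\lambda/k\mu$, one finds that the generic summand of $\mathbb{L}^{-1}(\theta^{a})(s)$ equals
\begin{equation*}
a\,\lambda^{n}\,\Big(\prod_{i=1}^{l}\frac{c_{i}^{m_{i}}}{m_{i}!}\Big)\,\frac{(k\mu)^{a+k\sum_{j=1}^{l}jm_{j}}}{(a+k\sum_{j=1}^{l}jm_{j})!}\,s^{a+n+k\sum_{j=1}^{l}jm_{j}-1}\,e^{-(\lambda+k\mu)s}.
\end{equation*}
Integrating over $s\in(0,t)$ and interchanging the (nonnegative) sum with the integral then yields exactly \eqref{busy}.

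The genuinely delicate point is the argument behind the first two paragraphs: that the successive one-level descent times are i.i.d.\ (strong Markov property plus spatial homogeneity of the up-jumps on $\{1,2,\dots\}$), and that the fixed point $\beta$ of the functional equation is the \emph{Rouch\'e} root $\theta$ rather than some other branch --- a point settled by $0<\beta(z)<1$ together with the uniqueness already established in the derivation of \eqref{3.5}. Granting this, the term-by-term inversion and integration are routine (monotone convergence, using $\int_{0}^{\infty}\mathbb{L}^{-1}(\theta^{a})(s)\,\mathrm{d}s\le1$).
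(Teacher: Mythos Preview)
Your argument is correct and reaches the same endpoint --- $\mathbb{L}^{-1}(\theta^{a})$ as the busy-period density, then integration via \eqref{3.14} --- but by a genuinely different route. The paper proceeds analytically: it introduces the absorbed process $\{\mathcal{L}_{*}(t)\}$ started at $a$ with $0$ absorbing, writes down its Kolmogorov forward equations, forms the generating function $G_{*}(x,t)=\sum_{n\ge1}x^{n}p_{n}^{*}(t)$, derives a PDE, takes the Laplace transform, and applies the same Rouch\'e argument to the denominator to conclude $k\mu\,\tilde{p}_{1}^{*}(z)=\theta^{a}$; the formula then follows from the boundary equation $\frac{\mathrm{d}}{\mathrm{d}t}p_{0}^{*}=k\mu\,p_{1}^{*}$ and $F_{B}=p_{0}^{*}$. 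Your approach bypasses the generating-function machinery by exploiting the skip-free-downward structure of $\{\mathcal{L}(t)\}$: the ladder decomposition $B\stackrel{d}{=}D_{1}+\dots+D_{a}$ plus a one-step renewal for $\beta(z)=\mathbb{E}[e^{-zD}]$ identifies $\beta$ directly as the Rouch\'e root $\theta$, so $\mathbb{E}[e^{-zB}]=\theta^{a}$. Your method is more probabilistic and makes the appearance of $\theta^{a}$ transparent; the paper's method stays parallel to the derivation of $p_{0}(t)$ and does not need to isolate the spatial homogeneity on $\{1,2,\dots\}$.
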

	   \begin{proof}
		In GCP, a packet of jumps (customers) arrives according to the Poisson process (see Dhillon and Kataria (2024a), p. 1112).

		Let us consider the process $\{{\mathcal{L}}_{*}(t)\}_{t\geq0}$  which starts with the arrival of customer(s) and ceases whenever the number of phases in $\{\mathcal{L}(t)\}_{t\geq0}$ becomes zero. Let
	    $p^{*}_{n}(t)=\mathrm{Pr}({\mathcal{L}}_{*}(t)=n|{\mathcal{L}}_{*}(0)=a)$, $a\in \{k, 2k,\dots,lk\}$
		be its state probabilities.
		The state probabilities of $\{\mathcal{L}_{*}(t)\}_{t\geq0}$ solves the following system of differential equations (see Luchak (1958), Eq. (36)-Eq. (38)):
		\begin{align}\label{busyDE}
			\left.
			\begin{aligned}
				\frac{\mathrm {d}}{\mathrm {d}t}p^{*}_{0}(t)
				&=k\mu p^{*}_{1}(t),  \\
				\frac{\mathrm {d}}{\mathrm {d}t}p^{*}_{n}(t)
				&=-(\lambda + k \mu)p^{*}_{n}(t)+k\mu p^{*}_{n+1}(t), \, 1\leq n \leq k, \\
				\frac{\mathrm {d}}{\mathrm {d}t}p^{*}_{n}(t)
				&=-(\lambda + k \mu)p^{*}_{n}(t)+k\mu p^{*}_{n+1}(t)+\lambda\sum_{m=1}^{n-1}c'_{m}p^{*}_{n-m}(t), \, n \geq k+1,
			\end{aligned}
			\right\}
		\end{align}
		with initial conditions $p^{*}_{a}(0)=1$.
		Here, 
		\begin{equation*}
			c'_{m}=
			\begin{cases}
				\lambda_{i}/\sum_{j=1}^{l}\lambda_{j},\,m=ik,i=1,2,\dots,l,\\
				0,\, \text{otherwise}.
			\end{cases}
		\end{equation*}
		The processes $\{{\mathcal{L}}_{*}(t)\}_{t\geq0}$ and $\{\mathcal{L}(t)\}_{t\geq0}$ have the same behavior except that $\{{\mathcal{L}}_{*}(t)\}_{t\geq0}$ starts from $a$ (indicating the state when the first group of customers enter the queue) whereas $\{\mathcal{L}(t)\}_{t\geq0}$ starts from 0. Additionally, 0 is treated as an absorbing state in this process. By the construction of $\{\mathcal{L}_{*}(t)\}_{t\geq0}$, we have $F_{B}(t)=p^{*}_{0}(t)$.
		
		Let us define the generating function of $\{{\mathcal{L}}_{*}(t)\}_{t\geq0}$ as follows:
		\begin{equation*}
			G_{*}(x,t)=\sum_{n=1}^{\infty}x^{n}p^{*}_{n}(t).
		\end{equation*}
		On multiplying the first and second equation of (\ref{busyDE})   by $x^{n+1}$ and summing over $n$, we get
		\begin{equation}\label{bpde}
			x\frac{\partial}{\partial t}G_{*}(x,t)=\Big(k\mu-(\lambda+k\mu)x+\lambda x\sum_{i=1}^{l}c_{i}x^{ik}\Big)G_{*}(x,t)-k\mu xp^{*}_{1}(t),
		\end{equation}
		with $G_{*}(x,0)=x^{a}$.
		On taking the Laplace transform on both sides of (\ref{bpde}), we have
		\begin{equation*}
			\tilde{G}_{*}(x,z)=\frac{x^{a+1}-k\mu x\tilde{\bf{p_{1}}}(z)}{(\lambda + k \mu +z)x-k \mu - \lambda x\sum_{i=1}^{l}c_{i}x^{ik}}, \, z>0.  
		\end{equation*}
		Now, from the same argument as used for \eqref{3.5}, we obtain
		\begin{equation}\label{kmup1}
			k\mu \tilde{\bf{p_{1}}}(z)=\theta^{a},\, a=k, 2k,\dots,lk.
		\end{equation}
		On taking the inverse Laplace transform of \eqref{kmup1} and using (\ref{3.14}), we obtain
		\begin{equation}\label{kmup1t}
			k\mu p^{*}_{1}(t)=\frac{k\mu a}{\tau}\Big(\frac{\tau^{a}}{a!} +\sum_{n=1}^{\infty}\eta^{n}\sum_{\substack{\sum_{j=1}^{l}m_{j}=n \\ m_{j}\in \mathbb{N}_{0}}}\Big({\prod_{i=1}^{l}\frac{c_{i}^{m_{i}}}{m_{i}!}}\Big)\frac{\tau^{a+n+k\sum_{j=1}^{l}jm_{j}}}{(a+k\sum_{j=1}^{l}jm_{j})!}\Big)e^{-(1+\eta)\tau}.
		\end{equation}
		On substituting \eqref{kmup1t} in the first equation of (\ref{busyDE}), we have
		{\small \begin{equation*}
				\frac{\mathrm {d}}{\mathrm {d}t}p^{*}{_{0}}(t)=a\Big(\sum_{n=0}^{\infty}\lambda^{n}\sum_{\substack{\sum_{j=1}^{l}m_{j}=n\\m_{j}\in\mathbb{N}_{0}}}\Big(\prod_{i=1}^{l}\frac{c_{i}^{m_{i}}}{m_{i}!}\Big)\frac{(k\mu)^{a+k\sum_{j=1}^{l}jm_{j}}}{(a+k\sum_{j=1}^{l}jm_{j})!}t^{a+n+k\sum_{j=1}^{l}jm_{j}-1}\Big)e^{-(\lambda+k\mu)t},
		\end{equation*}}
		whose solution gives the required result.		
	\end{proof}
	\begin{remark}
		For $l=1$, the distribution in \eqref{busy} get reduced to that of $M/E_{k}/1$ queue (see Ascione \textit{et al.} (2020), Eq. (13)).
	\end{remark}

	\section{Time-Changed Erlang Queue with Multiple Arrivals}\label{sec4}
    In this section, we introduce an Erlang queue with multiple arrivals that is time-changed by the first hitting time of an independent $\alpha$-stable subordinator. First, we give the definition of stable subordinator and its first hitting time.
    
	An $\alpha$-stable subordinator $\{D_{\alpha}(t)\}_{t\geq0}$, $0<\alpha<1$ is a one dimensional L\'evy process whose Laplace transform is given by (see Applebaum (2009)) $E(e^{-zD_{\alpha}(t)})=e^{-tz^{\alpha}}$, $z>0$. The first hitting time of $\{D_{\alpha}(t)\}_{t\geq0}$, that is, $Y_{\alpha}(t)=\text{inf}\{s\geq0:D_{\alpha}(s)>t\}$, $t\geq0$ is known as the inverse $\alpha$-stable subordinator. Its density function has the following Laplace transform (see Meerschaert {\it{et al.}} (2013)):
		\begin{equation}\label{ltLnu}
			\mathbb{L}(\mathrm{Pr}(Y_{\alpha}(t)\in\mathrm{d}y))(z)=z^{\alpha-1}e^{-yz^{\alpha}}\mathrm{d}y, \, z>0.
	\end{equation}
	
	From Section \ref{sec3}, we recall the {\it state-phase} process $\mathcal{Q}(t)=(\mathcal{N}(t),\mathcal{S}(t))$, $t\geq0$, where $\mathcal{N}(t)$ is the number of customers in the system and $\mathcal{S}(t)$ is the phase of a customer being served at time $t$. 
	
	Let us consider an inverse $\alpha$-stable subordinator $\{Y_{\alpha}(t)\}_{t\geq0}$ which is independent of $\{\mathcal{N}(t)\}_{t\geq0}$ and $\{\mathcal{S}(t)\}_{t\geq0}$. We define a {\it time-changed state-phase} process $\{\mathcal{Q}^{\alpha}(t)\}_{t\geq0}$ as follows: 
	\begin{equation*}
		\mathcal{Q}^{\alpha}(t)=\mathcal{Q}(Y_{\alpha}(t)),\, 0<\alpha<1
	\end{equation*}
	with $\mathcal{Q}^{1}(t)=\mathcal{Q}(t)$. For $t\geq0$, let us denote its state probabilities by
	\begin{align*}
		p_{n,s}^{\alpha}(t)&=\mathrm{Pr}(\mathcal{Q}^{\alpha}(t)=(n,s)|\mathcal{Q}^{\alpha}(0)=(0,0)),\, (n,s)\in \mathcal{H}\\
		p_{0}^{\alpha}(t)&=\mathrm{Pr}(\mathcal{Q}^{\alpha}(t)=(0,0)|\mathcal{Q}^{\alpha}(0)=(0,0)).
	\end{align*}

	In the following result, we obtain the system of fractional differential equations that governs the state probabilities of time-changed Erlang queue with multiple arrivals. The fractional derivative involved is the Caputo fractional derivative defined as follows (see Kilbas {\it{et al.}} (2006)): 
	\begin{equation}\label{cptd}
		\frac{\mathrm{d}^{\alpha}}{\mathrm{d}t^{\alpha}}f(t)=
		\begin{cases}
			\frac{1}{\Gamma(1-\alpha)}\int_{0}^{t}(t-y)^{-\alpha}f'(y)\mathrm{d}y,\, 0<\alpha<1, \\
			f'(y), \, \alpha=1,
		\end{cases}
	\end{equation}
	whose Laplace transform is given by
	\begin{equation}\label{cptlp}
		\mathbb{L}\Big(\frac{\mathrm{d}^{\alpha}}{\mathrm{d}t^{\alpha}}f(t)\Big)(z)=z^{\alpha}\Tilde{f}(z)-z^{\alpha-1}f(0).
	\end{equation} 
	\begin{theorem}\label{cptfde}
		The state probabilities of $\{\mathcal{Q}^{\alpha}(t)\}_{t\geq 0}$ solves the following system of difference-differential equations:
		\begin{align*}
				\frac{\mathrm{d}^{\alpha}}{\mathrm{d}t^{\alpha}}p_{0}^{\alpha}(t)
				&=-\lambda p_{0}^{\alpha}(t)+k\mu p_{1,1}^{\alpha}(t),  \\
				\frac{\mathrm{d}^{\alpha}}{\mathrm{d}t^{\alpha}}p_{1,s}^{\alpha}(t)
				&=-(\lambda + k \mu)p_{1,s}^{\alpha}(t)+k\mu p_{1,s+1}^{\alpha}(t), \, 1 \leq s \leq k-1, \\
				\frac{\mathrm{d}^{\alpha}}{\mathrm{d}t^{\alpha}}p_{1,k}^{\alpha}(t)
				&=-(\lambda + k \mu)p_{1,k}^{\alpha}(t)+k\mu p_{2,1}^{\alpha}(t)+ \lambda c_{1}p_{0}^{\alpha}(t),  \\
				\frac{\mathrm{d}^{\alpha}}{\mathrm{d}t^{\alpha}}p_{n,s}^{\alpha}(t)
				&=-(\lambda + k \mu)p_{n,s}^{\alpha}(t)+k\mu p_{n,s+1}^{\alpha}(t)+\lambda \sum_{m=1}^{\text{min}\{n,l\}}c_{m}p_{n-m,s}^{\alpha}(t), \,  1\leq s \leq k-1,  \, n \geq 2,  \\
				\frac{\mathrm{d}^{\alpha}}{\mathrm{d}t^{\alpha}}p_{n,k}^{\alpha}(t)
				&=-(\lambda + k \mu)p_{n,k}^{\alpha}(t)+k\mu p_{n+1,1}^{\alpha}(t)+\lambda \sum_{m=1}^{n-1}c_{m}p_{n-m,k}^{\alpha}(t)+\lambda c_{n}p_{0}^{\alpha}(t),\, 2\leq n\leq l, \\
				\frac{\mathrm{d}^{\alpha}}{\mathrm{d}t^{\alpha}}p_{n,k}^{\alpha}(t)&=-(\lambda + k \mu)p_{n,k}^{\alpha}(t)+k\mu p_{n+1,1}^{\alpha}(t)+ \lambda \sum_{m=1}^{l}c_{m}p_{n-m,k}^{\alpha}(t),\,n >l,
		\end{align*}
		with $p_{0}^{\alpha}(0)=1$ and
		$p_{n,s}^{\alpha}(0)=0$, $1\leq s\leq k$, $n\geq 1$, where $\frac{\mathrm{d}^{\alpha}}{\mathrm{d}t^{\alpha}}$ is the Caputo fractional derivative defined in \eqref{cptd}.
	\end{theorem}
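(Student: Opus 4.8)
The plan is to exploit the subordination relation $\mathcal{Q}^{\alpha}(t)=\mathcal{Q}(Y_{\alpha}(t))$ together with the governing equations for $\{\mathcal{Q}(t)\}_{t\ge0}$ established in Theorem \ref{t3.1}. First, conditioning on the value of the inverse stable subordinator and using its independence from $\{\mathcal{N}(t)\}_{t\ge0}$ and $\{\mathcal{S}(t)\}_{t\ge0}$, I would write, for every $(n,s)\in\mathcal{H}$,
\[
p_{n,s}^{\alpha}(t)=\int_{0}^{\infty}p_{n,s}(y)\,\mathrm{Pr}(Y_{\alpha}(t)\in\mathrm{d}y),\qquad p_{0}^{\alpha}(t)=\int_{0}^{\infty}p_{0}(y)\,\mathrm{Pr}(Y_{\alpha}(t)\in\mathrm{d}y).
\]
Taking the Laplace transform in $t$ and using \eqref{ltLnu} yields the key identity
\[
\widetilde{p_{n,s}^{\alpha}}(z)=z^{\alpha-1}\,\widetilde{p_{n,s}}(z^{\alpha}),\qquad \widetilde{p_{0}^{\alpha}}(z)=z^{\alpha-1}\,\widetilde{p_{0}}(z^{\alpha}),\qquad z>0.
\]

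Second, I would Laplace-transform the ODE system of Theorem \ref{t3.1}. Using the initial conditions $p_{0}(0)=1$, $p_{n,s}(0)=0$, the first equation becomes $z\widetilde{p_{0}}(z)-1=-\lambda\widetilde{p_{0}}(z)+k\mu\widetilde{p_{1,1}}(z)$, and the remaining five families transform analogously, each linear term $\lambda\sum_{m}c_{m}p_{n-m,\cdot}$ passing through the transform term by term. In each transformed equation I replace $z$ by $z^{\alpha}$ and then multiply through by $z^{\alpha-1}$. Invoking the identity from the first step, every term $z^{\alpha-1}\widetilde{p_{n,s}}(z^{\alpha})$ becomes $\widetilde{p_{n,s}^{\alpha}}(z)$, while the left-hand side of the first equation turns into $z^{\alpha}\widetilde{p_{0}^{\alpha}}(z)-z^{\alpha-1}$, which by \eqref{cptlp} with $p_{0}^{\alpha}(0)=1$ is exactly $\mathbb{L}\big(\tfrac{\mathrm{d}^{\alpha}}{\mathrm{d}t^{\alpha}}p_{0}^{\alpha}(t)\big)(z)$; for the other equations the corresponding left-hand side is $z^{\alpha}\widetilde{p_{n,s}^{\alpha}}(z)$, which is $\mathbb{L}\big(\tfrac{\mathrm{d}^{\alpha}}{\mathrm{d}t^{\alpha}}p_{n,s}^{\alpha}(t)\big)(z)$ since $p_{n,s}^{\alpha}(0)=0$. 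Inverting the Laplace transform yields the asserted system line by line, and the initial conditions follow from $Y_{\alpha}(0)=0$ a.s., whence $\mathcal{Q}^{\alpha}(0)=\mathcal{Q}(0)=(0,0)$.

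The routine but essential points to verify are the legitimacy of the manipulations: the interchange of the integral over $y$ with the Laplace integral in $t$ (justified by Tonelli's theorem, all integrands being nonnegative), the convergence of the Laplace transforms for $z>0$ (the $p_{n,s}$ are probabilities, hence bounded), and the term-by-term transform of the finitely many convolution-type sums. The only genuine subtlety is bookkeeping the powers of $z$: one must check that $z^{\alpha-1}\cdot z^{\alpha}\widetilde{p_{n,s}}(z^{\alpha})=z^{\alpha}\big(z^{\alpha-1}\widetilde{p_{n,s}}(z^{\alpha})\big)=z^{\alpha}\widetilde{p_{n,s}^{\alpha}}(z)$, so that the Caputo Laplace symbol $z^{\alpha}\widetilde{f}(z)-z^{\alpha-1}f(0)$ appears in each line with the correct value of $f(0)$ (namely $1$ in the equation for $p_{0}^{\alpha}$ and $0$ in all the others). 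Once this is in place, the six families of fractional equations follow from the six families in Theorem \ref{t3.1}.
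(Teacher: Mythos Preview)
Your proposal is correct and rests on the same two ingredients as the paper: the subordination identity $\widetilde{p_{n,s}^{\alpha}}(z)=z^{\alpha-1}\widetilde{p_{n,s}}(z^{\alpha})$ obtained from \eqref{ltLnu}, and the Caputo Laplace formula \eqref{cptlp}. The difference is purely organisational. You work equation-by-equation, Laplace-transforming each of the six families in Theorem~\ref{t3.1}, substituting $z\mapsto z^{\alpha}$, multiplying by $z^{\alpha-1}$, and recognising the Caputo symbol. The paper instead aggregates everything into the generating function $G^{\alpha}(x,t)$, argues that the system is equivalent to the single fractional equation \eqref{cachyeq} for $G^{\alpha}$, and then verifies its Laplace transform \eqref{cauchyLL} by substituting $\widetilde{G}^{\alpha}(x,z)=z^{\alpha-1}\widetilde{G}(x,z^{\alpha})$ and invoking the non-fractional generating-function equation \eqref{3.2}. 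Your route is more transparent and avoids the (unwritten) check that the generating-function equation is genuinely equivalent to the full system; the paper's route is more compact, reducing six verifications to one. Either way the substance is identical.
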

	\begin{proof}		
		Let us consider the following generating function:
        \begin{equation*}
			G^{\alpha}(x,t)=p_{0}^{\alpha}(t)+\sum_{n=1}^{\infty} \sum_{s=1}^{k}x^{(n-1)k+s}p_{n,s}^{\alpha}(t), \,t\geq0,\,|x|\leq1.
		\end{equation*}
		For $\alpha=1$, it reduces to the generating function of $\mathcal{Q}(t)$, that is, $G^{1}(x,t)=G(x,t)$.
	
		In the system of difference-differential equations given in Theorem \ref{cptfde}, on multiplying the first equation by \(x\), the second by \(x^{s+1}\), the third by \(x^{k+1}\), the fourth by \(x^{k(n-1)+s+1}\), the fifth and sixth by \(x^{nk+1}\), and then summing all these equations over \(n\) and \(s\), it can be shown that the state probabilities of $\{\mathcal{Q}^{\alpha}(t)\}_{t\geq0}$ solves the system of difference-differential equations in Theorem \ref{cptfde} if and only if its generating function is the solution of the following equation:
		\begin{equation}\label{cachyeq}
			x\frac{\mathrm{d}^{\alpha}}{\mathrm{d}t^{\alpha}}G^{\alpha}(x,t)=\Big(k\mu(1-x)-\lambda x\Big(1-\sum_{m=1}^{l}c_{m}x^{mk}\Big)\Big)G^{\alpha}(x,t)-k\mu(1-x)p_{0}^{\alpha}(t),
		\end{equation}
		with initial condition $G^{\alpha}(x,0)=1$. 
		On taking the Laplace transform on both sides of \eqref{cachyeq} and using \eqref{cptlp}, we get 
		\begin{equation}\label{cauchyLL}
			\Big(xz^{\alpha}-k\mu(1-x)+\lambda x\Big(1-\sum_{m=1}^{l}c_{m}x^{mk}\Big)\Big)\tilde{G}^{\alpha}(x,z)=xz^{\alpha-1}-k\mu(1-x)\tilde{p_{0}}^{\alpha}(z).
		\end{equation}
		
		For $(n,s) \in \mathcal{H}$, we have
		\begin{align}\label{pnsalphat}
			p_{n,s}^{\alpha}(t)
			&=\mathrm{Pr}(\mathcal{Q}^{\alpha}(t)=(n,s)|\mathcal{Q}^{\alpha}(0)=(0,0))\nonumber\\
			&=\mathrm{Pr}(\mathcal{Q}(Y_{\alpha}(t))=(n,s)|\mathcal{Q}(0)=(0,0))\nonumber\\
			&=\int_{0}^{\infty}\mathrm{Pr}(\mathcal{Q}(y)=(n,s)|\mathcal{Q}(0)=(0,0))\mathrm{Pr}(Y_{\alpha}(t)\in\mathrm{d}y)\nonumber\\
			&=\int_{0}^{\infty}p_{n,s}(y)\mathrm{Pr}(Y_{\alpha}(t)\in\mathrm{d}y)
		\end{align}
		and
		\begin{equation}\label{p0mu}
			{p}_{0}^{\alpha}(t)=\int_{0}^{\infty}p_{0}(y)\mathrm{Pr}(Y_{\alpha}(t)\in\mathrm{d}y).
		\end{equation}
		Thus, we have 
		\begin{equation}\label{Gxtmu}
			{G}^{\alpha}(x,t)=\int_{0}^{\infty}G(x,y)\mathrm{Pr}(Y_{\alpha}(t)\in\mathrm{d}y).
		\end{equation}
		On taking the Laplace transform of \eqref{p0mu} and \eqref{Gxtmu}, we get
		\begin{equation}\label{p0nul}
			\Tilde{p_{0}}^{\alpha}(z)=z^{\alpha-1}\int_{0}^{\infty}p_{0}(y)e^{-yz^{\alpha}}\mathrm{d}y
		\end{equation}
		and
		\begin{equation}\label{Gxtnul}
			\Tilde{G}^{\alpha}(x,z)=z^{\alpha-1}\int_{0}^{\infty}G(x,y)(y)e^{-yz^{\alpha}}\mathrm{d}y,
		\end{equation}
		respectively, where we have used \eqref{ltLnu}. 
		
		Now, by using \eqref{p0nul} and \eqref{Gxtnul} in \eqref{cauchyLL}, we obtain \begin{equation*}\label{lptcde}
				\int_{0}^{\infty}\Big(\Big(xz^{\alpha}-k\mu(1-x)+\lambda x\Big(1-\sum_{m=1}^{l}c_{m}x^{mk}\Big)\Big)G(x,y)+k\mu(1-x)p_{0}(y)\Big)z^{\alpha-1}e^{-yz^{\alpha}}\mathrm{d}y=xz^{\alpha-1},
		\end{equation*}
		which on using \eqref{3.2} gives us the following identity: 
		\begin{equation*}\label{integral}
			\int_{0}^{\infty}\Big(xz^{\alpha}G(x,y)-x \frac{\partial }{\partial y}G(x,y)\Big)z^{\alpha-1}e^{-yz^{\alpha}}\mathrm{d}y=xz^{\alpha-1}.
		\end{equation*}
		This establishes the required result.
	\end{proof}
 \begin{remark}
		For $l=1$, the system of difference-differential equations given in Theorem \ref{cptfde} reduces to that of fractional Erlang queue (see Griffiths \textit{et al.} (2006), Eq. (4)), and for $\alpha=1$, it reduces to the system of difference-differential equations for the {\it{state-phase}} probabilities of Erlang queue with multiple arrivals (see Theorem \ref{t3.1}). 
	\end{remark} 
Now, we obtain the probability that there are no customers in the system at time $t$. The following result will be used:

 The Laplace transform of three-parameter Mittag-Leffler function is given by (see Haubold {\it et al.} (2011)) 
\begin{equation}\label{ltm}
	\mathbb{L}(t^{\beta-1}E_{\alpha,\beta}^{\gamma}(\omega t^{\alpha}))(z)=\frac{z^{\alpha\gamma-\beta}}{(z^{\alpha}-\omega)^{\gamma}}, t,\omega\in\mathbb{R},\,\alpha,\,\beta,\,\gamma,\,z>0,\,|\omega z^{\alpha}|<1,
\end{equation}
where
 	\begin{equation}\label{Mittag}
		E_{\alpha,\beta}^{\gamma}(t)\coloneqq\sum_{r=0}^{\infty}\frac{\Gamma(\gamma+r)t^{r}}{r!\Gamma(\gamma)\Gamma(\alpha r+\beta)}
	\end{equation}
	is the three-parameter Mittag-Leffler function (see Kilbas {\it et al.} (2006)).
	\begin{theorem}\label{theorem4.2}
		Let $ \textbf{m}=(m_{1},m_{2},\dots,m_{l})$, $m_{j}\in \mathbb{N}_{0}$. The state probability $p_{0}^{\alpha}(t)$ of $\{\mathcal{Q}^{\alpha}(t)\}_{t\geq0}$ is given by
		\begin{equation*}
			p_{0}^{\alpha}(t)=\sum_{h=1}^{\infty}\sum_{n=0}^{\infty}\sum_{\substack{\sum_{j=1}^{l}m_{j}=n\\m_{j}\geq0}}C_{h,n}^{0}(\textbf{m})t^{\beta_{h,n}^{0}(\textbf{m})-1}E_{\alpha,\beta_{h,n}^{0}(\textbf{m})}^{\gamma_{h,n}^{0}(\textbf{m})}(-(\lambda+k\mu) t^{\alpha}),
		\end{equation*}
		%\begin{equation*}
		%	\Tilde{p}_{0}^{\alpha}(z)=\sum_{h=1}^{\infty}\sum_{n=0}^{\infty}\sum_{\substack{\sum_{j=1}^{l}m_{j}=n\\m_{j}\geq0}}C_{h,n}^{0}\frac{z^{\alpha\gamma_{h,n}^{0}-\beta_{h,n}^{0}}}{(\lambda+k\mu+z^{\alpha})^{\gamma_{h,n}^{0}}},
		%\end{equation*}
		where $E_{\alpha,\beta}^{\gamma}(\cdot)$ is the three parameter Mittag-Leffler function defined in \eqref{Mittag} and
		\begin{equation*}
			C_{h,n}^{0}(\textbf{m})=h\lambda^{n}\Big(\prod_{i=1}^{l}\frac{c_{i}^{m_{i}}}{m_{i}!}\Big)\frac{(k\mu)^{\gamma_{h,n}^{0}(\textbf{m})-n-1}}{(\gamma_{h,n}^{0}(\textbf{m})-n)!}(\gamma_{h,n}^{0}(\textbf{m})-1)!,
		\end{equation*}
		$\gamma_{h,n}^{0}(\textbf{m})=h+n+k\sum_{j=1}^{l}jm_{j}$ and $\beta_{h,n}^{0}(\textbf{m})=\alpha(\gamma_{h,n}^{0}(\textbf{m})-1)+1$.
	\end{theorem}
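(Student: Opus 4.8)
The plan is to route through the Laplace transform together with the subordination identity \eqref{p0mu}. First I would rewrite the series \eqref{3.15} so that the constants of the statement become visible. Putting $\tau=k\mu t$, $\eta=\lambda/k\mu$ and abbreviating $\gamma:=\gamma_{h,n}^{0}(\textbf{m})=h+n+k\sum_{j=1}^{l}jm_{j}$, so that $h+k\sum_{j=1}^{l}jm_{j}=\gamma-n$, a direct bookkeeping from \eqref{3.15} gives
\begin{equation*}
	p_{0}(t)=\sum_{h=1}^{\infty}\sum_{n=0}^{\infty}\sum_{\substack{\sum_{j=1}^{l}m_{j}=n\\m_{j}\geq0}}\frac{C_{h,n}^{0}(\textbf{m})}{(\gamma_{h,n}^{0}(\textbf{m})-1)!}\,t^{\gamma_{h,n}^{0}(\textbf{m})-1}e^{-(\lambda+k\mu)t}.
\end{equation*}
Taking the Laplace transform termwise (legitimate since \eqref{3.15} converges absolutely) and using $\mathbb{L}(t^{\gamma-1}e^{-(\lambda+k\mu)t})(z)=(\gamma-1)!/(z+\lambda+k\mu)^{\gamma}$, the factorials cancel and
\begin{equation*}
	\tilde{p_{0}}(z)=\sum_{h,n,\textbf{m}}\frac{C_{h,n}^{0}(\textbf{m})}{(z+\lambda+k\mu)^{\gamma_{h,n}^{0}(\textbf{m})}}.
\end{equation*}

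Next, \eqref{p0nul} reads $\tilde{p_{0}}^{\alpha}(z)=z^{\alpha-1}\tilde{p_{0}}(z^{\alpha})$, hence
\begin{equation*}
	\tilde{p_{0}}^{\alpha}(z)=\sum_{h,n,\textbf{m}}C_{h,n}^{0}(\textbf{m})\,\frac{z^{\alpha-1}}{(z^{\alpha}+\lambda+k\mu)^{\gamma_{h,n}^{0}(\textbf{m})}}.
\end{equation*}
I would then match each summand against \eqref{ltm} with $\gamma=\gamma_{h,n}^{0}(\textbf{m})$, $\omega=-(\lambda+k\mu)$ and $\beta=\beta_{h,n}^{0}(\textbf{m})=\alpha(\gamma_{h,n}^{0}(\textbf{m})-1)+1$; the exponent matching $\alpha\gamma-\beta=\alpha-1$ is precisely what forces this choice of $\beta_{h,n}^{0}(\textbf{m})$, and with it the right-hand side of \eqref{ltm} is exactly $z^{\alpha-1}/(z^{\alpha}+\lambda+k\mu)^{\gamma_{h,n}^{0}(\textbf{m})}$. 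Therefore $\tilde{p_{0}}^{\alpha}(z)$ is the Laplace transform of $\sum_{h,n,\textbf{m}}C_{h,n}^{0}(\textbf{m})\,t^{\beta_{h,n}^{0}(\textbf{m})-1}E_{\alpha,\beta_{h,n}^{0}(\textbf{m})}^{\gamma_{h,n}^{0}(\textbf{m})}(-(\lambda+k\mu)t^{\alpha})$, and inverting yields the asserted formula.

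The two index manipulations (collapsing \eqref{3.15} to the $C_{h,n}^{0}$ form, and reading off $\beta_{h,n}^{0}(\textbf{m})$) are routine. The point that requires care is the analytic justification of the two termwise transform operations. For the forward Laplace transform this is immediate from absolute convergence of \eqref{3.15}; for the termwise inversion I would argue that the resulting series of Mittag-Leffler functions converges absolutely and locally uniformly in $t$ — each $E_{\alpha,\beta}^{\gamma}$ is entire and the coefficients $C_{h,n}^{0}(\textbf{m})$ are damped by the factorial $(\gamma_{h,n}^{0}(\textbf{m})-n)!$ in the denominator — so that its Laplace transform may be computed termwise and uniqueness of Laplace transforms closes the argument. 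Alternatively, one can sidestep the inversion altogether by working in the time domain: insert \eqref{3.15} into $p_{0}^{\alpha}(t)=\int_{0}^{\infty}p_{0}(y)\,\mathrm{Pr}(Y_{\alpha}(t)\in\mathrm{d}y)$ from \eqref{p0mu} and evaluate each $\int_{0}^{\infty}y^{\gamma-1}e^{-(\lambda+k\mu)y}\mathrm{Pr}(Y_{\alpha}(t)\in\mathrm{d}y)$ via \eqref{ltLnu}, which reproduces the same Mittag-Leffler terms. One should also note that \eqref{ltm} is applied for $z$ in the half-plane where it is valid, the final identity then extending by analyticity.
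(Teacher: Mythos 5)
Your proposal is correct and follows essentially the same route as the paper: the paper likewise substitutes the series \eqref{3.15} into \eqref{p0nul}, evaluates the resulting Gamma-type integrals termwise to arrive at $\tilde{p_{0}}^{\alpha}(z)=\sum C_{h,n}^{0}(\textbf{m})\,z^{\alpha\gamma_{h,n}^{0}(\textbf{m})-\beta_{h,n}^{0}(\textbf{m})}/(\lambda+k\mu+z^{\alpha})^{\gamma_{h,n}^{0}(\textbf{m})}$, and then inverts termwise via \eqref{ltm}. Your preliminary rewriting of \eqref{3.15} in terms of $C_{h,n}^{0}(\textbf{m})$ and your remarks on justifying the termwise operations are accurate but only repackage the same computation.
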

	\begin{proof}
		Using \eqref{3.15} in \eqref{p0nul}, we get 
		\begin{align}
			\Tilde{{p}_{0}}^{\alpha}(z)
			&=\sum_{h=1}^{\infty}\sum_{n=0}^{\infty}\sum_{\substack{\sum_{j=1}^{l}m_{j}=n\\m_{j}\geq0}}h\lambda^{n}\Big(\prod_{i=1}^{l}\frac{c_{i}^{m_{i}}}{m_{i}!}\Big)\frac{(k\mu)^{h-1+k\sum_{j=1}^{l}jm_{j}}}{(h+k\sum_{j=1}^{l}jm_{j})!}z^{\alpha-1}\nonumber\\
			&\hspace{7.3cm}\cdot\int_{0}^{\infty}y^{h+n+k\sum_{j=1}^{l}jm_{j}-1}e^{-(\lambda+k\mu+z^{\alpha})y}\mathrm{d}y\nonumber\\
			&=\sum_{h=1}^{\infty}\sum_{n=0}^{\infty}\sum_{\substack{\sum_{j=1}^{l}m_{j}=n\\m_{j}\geq0}}h\lambda^{n}\Big(\prod_{i=1}^{l}\frac{c_{i}^{m_{i}}}{m_{i}!}\Big)\frac{(k\mu)^{h-1+k\sum_{j=1}^{l}jm_{j}}}{(h+k\sum_{j=1}^{l}jm_{j})!}\Big(h+n-1+k\sum_{j=1}^{l}jm_{j}\Big)!\nonumber\\
			&\hspace{9cm}\frac{z^{\alpha-1}}{(\lambda+k\mu+z^{\alpha})^{h+n+k\sum_{j=1}^{l}jm_{j}}}\nonumber\\
			&=\sum_{h=1}^{\infty}\sum_{n=0}^{\infty}\sum_{\substack{\sum_{j=1}^{l}m_{j}=n\\m_{j}\geq0}}C_{h,n}^{0}(\textbf{m})\frac{z^{\alpha\gamma_{h,n}^{0}(\textbf{m})-\beta_{h,n}^{0}(\textbf{m})}}{(\lambda+k\mu+z^{\alpha})^{\gamma_{h,n}^{0}(\textbf{m})}}\label{p0alpha4}.
		\end{align}	
		On taking the inverse Laplace transform of \eqref{p0alpha4} and using \eqref{ltm}, we get the required result.
	\end{proof}
    \begin{remark}
		On taking $l=1$ in Theorem \ref{theorem4.2}, we get the corresponding result for fractional Erlang queue (see Ascione {\it{et al.}} (2020), Theorem 5.1), and for $\alpha=1$, we get the zero {\it state-phase} probability of $\{\mathcal{Q}(t)\}_{t\geq0}$ (see \eqref{3.15}).
	\end{remark}
	The following result will be used.
	\begin{lemma}\label{lemma4.1}
		Let $r$ be a non-negative integer. Then,
		{\footnotesize\begin{equation*}
			\int_{0}^{\infty}\int_{0}^{y}p_{0}(u)(y-u)^{r}e^{-(\lambda+k\mu)(y-u)}e^{-yz^{\alpha}}\mathrm{d}u\mathrm{d}y=\sum_{h=1}^{\infty}\sum_{n=0}^{\infty}\sum_{\substack{\sum_{j=1}^{l}m_{j}=n\\m_{j}\geq0}}C_{h,n}^{0}(\textbf{m})\frac{r!}{(\lambda+k\mu+z^{\alpha})^{\gamma_{h,n}^{0}(\textbf{m})+r+1}},
		\end{equation*}}
	where $\textbf{m}=(m_{1},m_{2},\dots,m_{l})$ and $p_{0}(u)$ is the zero {\it state-phase} probability of $\{\mathcal{Q}(t)\}_{t\geq0}$.
	\end{lemma}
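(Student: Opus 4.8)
The plan is to recognise the inner integral as a Laplace convolution and then invoke the convolution property of the Laplace transform with transform variable $z^{\alpha}$. Put $g_{r}(t)=t^{r}e^{-(\lambda+k\mu)t}$, so that the inner integral $\int_{0}^{y}p_{0}(u)(y-u)^{r}e^{-(\lambda+k\mu)(y-u)}\,\mathrm{d}u$ equals $(p_{0}*g_{r})(y)$, and hence the left-hand side of the claimed identity is $\int_{0}^{\infty}(p_{0}*g_{r})(y)e^{-yz^{\alpha}}\,\mathrm{d}y$. Since $p_{0}\geq0$ and $g_{r}\geq0$, Tonelli's theorem justifies interchanging the order of integration, and the convolution theorem yields
\begin{align*}
&\int_{0}^{\infty}\int_{0}^{y}p_{0}(u)(y-u)^{r}e^{-(\lambda+k\mu)(y-u)}e^{-yz^{\alpha}}\,\mathrm{d}u\,\mathrm{d}y\\
&\qquad=\Big(\int_{0}^{\infty}p_{0}(y)e^{-yz^{\alpha}}\,\mathrm{d}y\Big)\Big(\int_{0}^{\infty}t^{r}e^{-(\lambda+k\mu+z^{\alpha})t}\,\mathrm{d}t\Big).
\end{align*}

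First I would evaluate the two factors on the right. The second is elementary: $\int_{0}^{\infty}t^{r}e^{-(\lambda+k\mu+z^{\alpha})t}\,\mathrm{d}t=r!\,(\lambda+k\mu+z^{\alpha})^{-(r+1)}$. For the first factor, I would combine \eqref{p0nul} with \eqref{p0alpha4}, noting that $\alpha\gamma_{h,n}^{0}(\textbf{m})-\beta_{h,n}^{0}(\textbf{m})=\alpha-1$ directly from the definition of $\beta_{h,n}^{0}(\textbf{m})$, so that the power $z^{1-\alpha}$ produced by \eqref{p0nul} exactly cancels the powers of $z$ appearing in \eqref{p0alpha4}, giving
\begin{equation*}
\int_{0}^{\infty}p_{0}(y)e^{-yz^{\alpha}}\,\mathrm{d}y=z^{1-\alpha}\tilde{p_{0}}^{\alpha}(z)=\sum_{h=1}^{\infty}\sum_{n=0}^{\infty}\sum_{\substack{\sum_{j=1}^{l}m_{j}=n\\m_{j}\geq0}}\frac{C_{h,n}^{0}(\textbf{m})}{(\lambda+k\mu+z^{\alpha})^{\gamma_{h,n}^{0}(\textbf{m})}}.
\end{equation*}
Equivalently, this same series is obtained by Laplace-transforming the representation \eqref{3.15} of $p_{0}$ term by term and replacing the transform variable by $z^{\alpha}$.

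Multiplying the two factors and merging the powers of $\lambda+k\mu+z^{\alpha}$ — so that the total exponent in the denominator becomes $\gamma_{h,n}^{0}(\textbf{m})+r+1$ — gives precisely the right-hand side of the lemma. The only step that requires some care is the justification of the term-by-term manipulations: all terms in the series \eqref{3.15} for $p_{0}$ are non-negative, $g_{r}\geq0$, and for $z>0$ the resulting triple series of non-negative terms is finite (it is dominated by $\tilde{p_{0}}(z^{\alpha})\,r!\,(\lambda+k\mu+z^{\alpha})^{-(r+1)}<\infty$), so that Tonelli's theorem, the convolution identity, and the interchange of summation with integration are all legitimate. I expect this bookkeeping, rather than any genuine difficulty, to be the main point that needs to be spelled out carefully.
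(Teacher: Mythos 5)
Your proposal is correct and follows essentially the same route as the paper: the paper also factorises the double integral into $\bigl(\int_{0}^{\infty}p_{0}(u)e^{-uz^{\alpha}}\mathrm{d}u\bigr)\bigl(\int_{0}^{\infty}x^{r}e^{-(\lambda+k\mu+z^{\alpha})x}\mathrm{d}x\bigr)$ (via Fubini with an indicator function and a change of variable, which is just your convolution argument spelled out) and then identifies the first factor as $\sum_{h,n,\textbf{m}}C_{h,n}^{0}(\textbf{m})(\lambda+k\mu+z^{\alpha})^{-\gamma_{h,n}^{0}(\textbf{m})}$ using \eqref{p0nul} and \eqref{p0alpha4}, exactly as you do.
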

	\begin{proof}
	Let $\chi_{A}(\cdot)$ be an indicator function on set $A$. Then for $r\geq0$, we have
	{\small	\begin{align}
			\int_{0}^{\infty}&\int_{0}^{y}p_{0}(u)(y-u)^{r}e^{-(\lambda+k\mu)(y-u)}e^{-yz^{\alpha}}\mathrm{d}u\mathrm{d}y\nonumber\\
			&=\int_{0}^{\infty}\int_{0}^{\infty}p_{0}(u)(y-u)^{r}e^{-(\lambda+k\mu+z^{\alpha})(y-u)}e^{-uz^{\alpha}}\chi_{[0,y]}(u)\mathrm{d}u\mathrm{d}y\nonumber\\
			&=\int_{0}^{\infty}\int_{0}^{\infty}p_{0}(u)(y-u)^{r}e^{-(\lambda+k\mu+z^{\alpha})(y-u)}e^{-uz^{\alpha}}\chi_{[0,y]}(u)\mathrm{d}y\mathrm{d}u\,\,(\text{using Fubini's theorem})\nonumber\\
			&=\int_{0}^{\infty}\int_{0}^{\infty}p_{0}(u)x^{r}e^{-(\lambda+k\mu+z^{\alpha})x}e^{-uz^{\alpha}}\chi_{[0,x+u]}(u)\mathrm{d}x\mathrm{d}u\nonumber\\
			&=\int_{0}^{\infty}\int_{0}^{\infty}p_{0}(u)x^{r}e^{-(\lambda+k\mu+z^{\alpha})x}e^{-uz^{\alpha}}\chi_{[0,\infty)}(x)\mathrm{d}x\mathrm{d}u\,\,\text{(as $\chi_{[0,x+u]}(u)=\chi_{[0,\infty)}(x)$)}\nonumber\\
			&=\Big(\int_{0}^{\infty}p_{0}(u)e^{-uz^{\alpha}}\mathrm{d}u\Big)\Big(\int_{0}^{\infty}x^{r}e^{-(\lambda+k\mu+z^{\alpha})x}\mathrm{d}x\Big)\label{int1}.
		\end{align}}
	From \eqref{p0nul}, we have
		\begin{align}
			\int_{0}^{\infty}p_{0}(u)e^{-uz^{\alpha}}\mathrm{d}u&=\frac{\tilde{p_{0}}^{\alpha}(z)}{z^{\alpha-1}}\nonumber\\
			&=\sum_{h=1}^{\infty}\sum_{n=0}^{\infty}\sum_{\substack{\sum_{j=1}^{l}m_{j}=n\\m_{j}\geq0}}\frac{C_{h,n}^{0}(\textbf{m})}{(\lambda+k\mu+z^{\alpha})^{\gamma_{h,n}^{0}(\textbf{m})}},\label{int3}
		\end{align}
		where we have used \eqref{p0alpha4}. On substituting \eqref{int3} in \eqref{int1}, we get the required result.	\end{proof}
		\begin{remark}
			For $l=1$, Lemma \ref{lemma4.1} get reduced to Lemma 5.4 of Ascione {\it{et al.}} (2020).
		\end{remark}
	\begin{theorem}\label{thm4.3}
		Let $ \textbf{m}=(m_{1},m_{2},\dots,m_{l})$, and $\textbf{m}'=(m'_{1},m'_{2},\dots,m'_{l})$, $m_{j}, m'_{j}\in \mathbb{N}_{0}$.
         For $t\geq0$ and $1\leq s \leq k$, the state probabilities of $\{\mathcal{Q}^{\alpha}(t)\}_{t\geq0}$ are given by		
         {\footnotesize \begin{align*}
         	p_{n,s}^{\alpha}(t)&=\sum_{\substack{m_{j},r\geq0\\\sum_{j=1}^{l}jm_{j} -r=n}}A_{r}^{n,s}(\textbf{m})t^{\pi_{r}^{n,s}(\textbf{m})-1}E_{\alpha,\pi_{r}^{n,s}(\textbf{m})}^{a_{r}^{n,s}(\textbf{m})}(-(\lambda+k\mu) t^{\alpha})\\
         	&\ \ +\sum_{\substack{m_{j},r\geq0\\\sum_{j=1}^{l}jm_{j} -r=n}}\sum_{h=1}^{\infty}\sum_{w=0}^{\infty}\sum_{\substack{\sum_{j=1}^{l}m'_{j}=w\\m'_{j}\geq0}}B_{r,h,w}^{n,s}(\textbf{m},\textbf{m}')t^{\rho_{r,h,w}^{n,s}(\textbf{m})-1}E_{\alpha,\rho_{r,h,w}^{n,s}(\textbf{m},\textbf{m}')}^{b_{r,h,w}^{n,s}(\textbf{m},\textbf{m}')}(-(\lambda+k\mu) t^{\alpha})\\
         	&\ \ -\sum_{\substack{m_{j},r\geq0\\\sum_{j=1}^{l}jm_{j} -r=n}}\sum_{h=1}^{\infty}\sum_{w=0}^{\infty}\sum_{\substack{\sum_{j=1}^{l}m'_{j}=w\\m'_{j}\geq0}}C_{r,h,w}^{n,s}(\textbf{m},\textbf{m}')t^{\delta_{r,h,w}^{n,s}(\textbf{m},\textbf{m}')-1}E_{\alpha,\delta_{r,h,w}^{n,s}(\textbf{m},\textbf{m}')}^{c_{r,h,w}^{n,s}(\textbf{m},\textbf{m}')}(-(\lambda+k\mu) t^{\alpha}),\,n\geq1.
         	\end{align*}}
    	Here, $E_{\alpha,\beta}^{\gamma}(\cdot)$ is the three parameter Mittag-Leffler function defined in \eqref{Mittag} and
         	\begin{align*}
         		A_{r}^{n,s}(\textbf{m})&=\Big(\prod_{i=1}^{l}\frac{c_{i}^{m_{i}}}{m_{i}!}\Big)\frac{\lambda^{\sum_{j=1}^{l}m_{j}}(k\mu )^{k(r+1)-s}}{(k(r+1)-s)!}(a_{r}^{n,s}(\textbf{m})-1)!,\\
         		C_{r,h,w}^{n,s}(\textbf{m},\textbf{m}')&=\begin{cases}
         			k\mu A_{r}^{n,s+1}(\textbf{m})C_{h,w}^{0}(\textbf{m}'),\,s=1,2,\dots,k-1,\\
         			k\mu A_{r}^{n,1}(\textbf{m}'')C_{h,w}^{0}(\textbf{m}'),\, s=k,
         		\end{cases}	\\
         		c_{r,h,w}^{n,s}(\textbf{m},\textbf{m}')&=\begin{cases}
         			a_{r}^{n,s+1}(\textbf{m})+\gamma_{h,w}^{0}(\textbf{m}'),\,s=1,2,\dots,k-1,\\
         			a_{r}^{n,1}(\textbf{m}'')+\gamma_{h,w}^{0}(\textbf{m}'),\, s=k,
         		\end{cases}
         	\end{align*}
         	where $\textbf{m}''=(m_{1}+1,m_{2},\dots,m_{l})$, 
         	$a_{r}^{n,s}(\textbf{m})=\sum_{j=1}^{l}m_{j}+k(r+1)-s+1$, $\pi_{r}^{n,s}(\textbf{m})=\alpha(a_{r}^{n,s}(\textbf{m})-1)+1$, $B_{r,h,w}^{n,s}(\textbf{m},\textbf{m}')=k\mu A_{r}^{n,s}(\textbf{m})C_{h,w}^{0}(\textbf{m}')$, $b_{r,h,w}^{n,s}(\textbf{m},\textbf{m}')=a_{r}^{n,s}(\textbf{m})+\gamma_{h,w}^{0}(\textbf{m}')$, $\rho_{r,h,w}^{n,s}(\textbf{m},\textbf{m}')=\alpha(b_{r,h,w}^{n,s}(\textbf{m},\textbf{m}')-1)+1$ and 
         	 $\delta_{r,h,w}^{n,s}=\alpha(c_{r,h,w}^{n,s}(\textbf{m},\textbf{m}')-1)+1$.
	\end{theorem}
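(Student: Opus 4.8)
The plan is to exploit the subordination identities \eqref{pnsalphat}--\eqref{p0mu}, substitute the explicit transient probabilities of Theorem \ref{t3.2}, and then recognize the resulting Laplace transforms as those of three--parameter Mittag--Leffler functions through \eqref{ltm}. Taking the Laplace transform in \eqref{pnsalphat} and using \eqref{ltLnu} together with Fubini's theorem gives, for $1\le s\le k$,
\[
\widetilde{p}_{n,s}^{\,\alpha}(z)=z^{\alpha-1}\int_{0}^{\infty}p_{n,s}(y)e^{-yz^{\alpha}}\,\mathrm{d}y,
\]
so the whole computation reduces to evaluating $\int_{0}^{\infty}p_{n,s}(y)e^{-yz^{\alpha}}\,\mathrm{d}y$ and then inverting term by term. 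One substitutes \eqref{pnst} (for $1\le s\le k-1$) and \eqref{pnkt} (for $s=k$); in each case $p_{n,s}(y)$ is a sum over $m_{j},r\ge0$ with $\sum_{j}jm_{j}-r=n$ of three kinds of summands: a pure polynomial--exponential term $e^{-(\lambda+k\mu)y}y^{a_{r}^{n,s}(\mathbf{m})-1}$ up to constants, and two convolution terms of the shape $e^{-(\lambda+k\mu)y}\int_{0}^{y}p_{0}(u)e^{(\lambda+k\mu)u}(y-u)^{r'}\,\mathrm{d}u$.

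For the first kind, $\int_{0}^{\infty}e^{-(\lambda+k\mu+z^{\alpha})y}y^{a-1}\,\mathrm{d}y=(a-1)!\,(\lambda+k\mu+z^{\alpha})^{-a}$ with $a=a_{r}^{n,s}(\mathbf{m})$; multiplying by $z^{\alpha-1}$ produces exactly $z^{\alpha a_{r}^{n,s}(\mathbf{m})-\pi_{r}^{n,s}(\mathbf{m})}/(z^{\alpha}+\lambda+k\mu)^{a_{r}^{n,s}(\mathbf{m})}$, which by \eqref{ltm} with $\gamma=a_{r}^{n,s}(\mathbf{m})$, $\beta=\pi_{r}^{n,s}(\mathbf{m})$, $\omega=-(\lambda+k\mu)$ is the Laplace transform of $A_{r}^{n,s}(\mathbf{m})\,t^{\pi_{r}^{n,s}(\mathbf{m})-1}E_{\alpha,\pi_{r}^{n,s}(\mathbf{m})}^{a_{r}^{n,s}(\mathbf{m})}(-(\lambda+k\mu)t^{\alpha})$; this accounts for the first sum in the statement. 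For the two convolution summands one invokes Lemma \ref{lemma4.1}, which evaluates $\int_{0}^{\infty}\!\int_{0}^{y}p_{0}(u)(y-u)^{r'}e^{-(\lambda+k\mu)(y-u)}e^{-yz^{\alpha}}\,\mathrm{d}u\,\mathrm{d}y$ as the series $\sum_{h,w,\mathbf{m}'}C_{h,w}^{0}(\mathbf{m}')\,r'!\,(\lambda+k\mu+z^{\alpha})^{-(\gamma_{h,w}^{0}(\mathbf{m}')+r'+1)}$. Combining this with the prefactors in \eqref{pnst}--\eqref{pnkt} (and, for $s=k$, absorbing the extra factor $c_{1}^{m_{1}+1}/(m_{1}+1)!$ into the shifted multi-index $\mathbf{m}''=(m_{1}+1,m_{2},\dots,m_{l})$) and again reading off the exponent via \eqref{ltm} produces precisely the $B$-- and $C$--sums, with the stated parameters $b_{r,h,w}^{n,s}$, $\rho_{r,h,w}^{n,s}$, $c_{r,h,w}^{n,s}$, $\delta_{r,h,w}^{n,s}$. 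A term-by-term inverse Laplace transform then gives the asserted formula.

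The main obstacle is not any single computation but the justification of the repeated interchange of summation and integration: one has to check absolute convergence of the triple/quadruple series (using the same bounds that legitimize \eqref{3.15} and Lemma \ref{lemma4.1}, together with $|{-}(\lambda+k\mu)z^{-\alpha}|<1$ for $z$ large, which is the hypothesis required in \eqref{ltm}) so that the term-by-term Laplace inversion is valid, and to keep the bookkeeping of the indices $a_{r}^{n,s}$, $\pi_{r}^{n,s}$ and their $B/C$ counterparts consistent --- in particular the $s=k$ case, where the shift $\mathbf{m}\mapsto\mathbf{m}''$ and the appearance of $A_{r}^{n,s+1}(\mathbf{m})$ (resp.\ $A_{r}^{n,1}(\mathbf{m}'')$) inside $C_{r,h,w}^{n,s}$ must be tracked carefully, since there the phase wraps around to $1$ and the multi-index advances accordingly.
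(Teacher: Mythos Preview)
Your proposal is correct and follows essentially the same route as the paper's proof: compute $\widetilde{p}_{n,s}^{\,\alpha}(z)=z^{\alpha-1}\int_{0}^{\infty}p_{n,s}(y)e^{-yz^{\alpha}}\,\mathrm{d}y$ from \eqref{pnsalphat} and \eqref{ltLnu}, substitute the explicit formulas \eqref{pnst}--\eqref{pnkt}, evaluate the pure polynomial--exponential pieces by a Gamma integral and the two convolution pieces via Lemma~\ref{lemma4.1}, and then invert term by term using \eqref{ltm}. The paper carries this out without the explicit discussion of absolute convergence that you flag, but otherwise the structure, the use of Lemma~\ref{lemma4.1}, and the handling of the $s=k$ case through the shifted multi-index $\mathbf{m}''$ are the same.
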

	\begin{proof}
		 On taking the Laplace transform of \eqref{pnsalphat} and using \eqref{ltLnu}, we get
		\begin{equation}\label{lppnst}
			\Tilde{p}_{n,s}^{\alpha}(z)
			=z^{\alpha-1}\int_{0}^{\infty}p_{n,s}(y)e^{-yz^{\alpha}}\mathrm{d}y.
		\end{equation}
		For $1\leq s\leq k-1$, by using \eqref{pnst}, we have
   {\scriptsize	\begin{align}
			\Tilde{p}_{n,s}^{\alpha}(z)
			&=\sum_{\substack{m_{j},r\geq0\\\sum_{j=1}^{l}jm_{j} -r=n}}\Big(\prod_{i=1}^{l}\frac{c_{i}^{m_{i}}}{m_{i}!}\Big)\frac{\lambda^{\sum_{j=1}^{l}m_{j}}(k\mu )^{k(r+1)-s}}{(k(r+1)-s)!} z^{\alpha-1}\int_{0}^{\infty}y^{\sum_{j=1}^{l}m_{j}+k(r+1)-s}e^{-(\lambda+k\mu+z^{\alpha})y}\mathrm{d}y\nonumber\\
			&\ \  +\sum_{\substack{m_{j},r\geq0\\\sum_{j=1}^{l}jm_{j} -r=n}}\Big(\prod_{i=1}^{l}\frac{c_{i}^{m_{i}}}{m_{i}!}\Big)\frac{\lambda^{\sum_{j=1}^{l}m_{j}}(k\mu )^{k(r+1)-s+1}}{(k(r+1)-s)!} z^{\alpha-1}\int_{0}^{\infty}\int_{0}^{y}p_{0}(u)(y-u)^{\sum_{j=1}^{l}m_{j}+k(r+1)-s}\nonumber\\
			&\hspace{2.5cm} \cdot e^{-(\lambda+k\mu)(y-u)} e^{-yz^{\alpha}}\mathrm{d}u\mathrm{d}y -\sum_{\substack{m_{j},r\geq0\\\sum_{j=1}^{l}jm_{j} -r=n}}\Big(\prod_{i=1}^{l}\frac{c_{i}^{m_{i}}}{m_{i}!}\Big)\frac{\lambda^{\sum_{j=1}^{l}m_{j}}(k\mu )^{k(r+1)-s}}{(k(r+1)-s-1)!} z^{\alpha-1}\nonumber\\
			&\hspace{5.5cm}\cdot\int_{0}^{\infty}\int_{0}^{y}p_{0}(u)(y-u)^{\sum_{j=1}^{l}m_{j}+k(r+1)-s-1}e^{-(\lambda+k\mu)(y-u)}e^{-yz^{\alpha}}\mathrm{d}u\mathrm{d}y\,\,\nonumber\\
			&=\sum_{\substack{m_{j},r\geq0\\\sum_{j=1}^{l}jm_{j} -r=n}}A_{r}^{n,s}(\textbf{m})\frac{z^{\alpha a_{r}^{n,s}(\textbf{m})-\pi_{r}^{n,s}(\textbf{m})}}{(\lambda+k\mu+z^{\alpha})^{a_{r}^{n,s}(\textbf{m})}}+\sum_{\substack{m_{j},r\geq0\\\sum_{j=1}^{l}jm_{j} -r=n}}\sum_{h=1}^{\infty}\sum_{w=0}^{\infty}\sum_{\substack{\sum_{j=1}^{l}m'_{j}=w\\m'_{j}\geq0}}B_{r,h,w}^{n,s}(\textbf{m},\textbf{m}')\nonumber\\
			&\ \ \cdot\frac{z^{\alpha b_{r,h,w}^{n,s}(\textbf{m},\textbf{m}')-\rho_{r,h,w}^{n,s}(\textbf{m},\textbf{m}')}}{(\lambda+k\mu+z^{\alpha})^{b_{r,h,w}^{n,s}(\textbf{m},\textbf{m}')}} -\sum_{\substack{m_{j},r\geq0\\\sum_{j=1}^{l}jm_{j} -r=n}}\sum_{h=1}^{\infty}\sum_{w=0}^{\infty}\sum_{\substack{\sum_{j=1}^{l}m'_{j}=w\\m'_{j}\geq0}}C_{r,h,w}^{n,s}(\textbf{m},\textbf{m}')\frac{z^{\alpha c_{r,h,w}^{n,s}(\textbf{m},\textbf{m}')-\delta_{r,h,w}^{n,s}(\textbf{m},\textbf{m}')}}{(\lambda+k\mu+z^{\alpha})^{c_{r,h,w}^{n,s}(\textbf{m},\textbf{m}')}},\label{pnsalpha}
		\end{align}}where the last step follows by using Lemma \ref{lemma4.1}.
	 
	   Similarly, for $s=k$, from \eqref{lppnst} and using \eqref{pnkt}, we have
	   {\scriptsize \begin{align}
			\Tilde{p}_{n,k}^{\alpha}(z)
			&=\sum_{\substack{m_{j},r\geq0\\\sum_{j=1}^{l}jm_{j} -r=n}}\Big(\prod_{i=1}^{l}\frac{c_{i}^{m_{i}}}{m_{i}!}\Big)\frac{\lambda^{\sum_{j=1}^{l}m_{j}}(k\mu )^{rk}}{(rk)!} z^{\alpha-1}\int_{0}^{\infty}y^{\sum_{j=1}^{l}m_{j}+rk}e^{-(\lambda+k\mu+z^{\alpha})y}\mathrm{d}y\nonumber\\
			&\ \ +\sum_{\substack{m_{j},r\geq0\\\sum_{j=1}^{l}jm_{j} -r=n}}\Big(\prod_{i=1}^{l}\frac{c_{i}^{m_{i}}}{m_{i}!}\Big)\frac{\lambda^{\sum_{j=1}^{l}m_{j}}(k\mu )^{rk+1}}{(rk)!} z^{\alpha-1}\int_{0}^{\infty}\int_{0}^{y}p_{0}(u)(y-u)^{\sum_{j=1}^{l}m_{j}+rk}\nonumber\\
			&\hspace{1.5cm} \cdot e^{-(\lambda+k\mu)(y-u)}e^{-yz^{\alpha}}\mathrm{d}u\mathrm{d}y-\sum_{\substack{m_{j},r\geq0\\\sum_{j=1}^{l}jm_{j} -r=n}}\Big(\frac{c_{1}^{m_{1}+1}}{(m_{1}+1)!}\Big)\Big(\prod_{i=2}^{l}\frac{c_{i}^{m_{i}}}{m_{i}!}\Big)\frac{\lambda^{\sum_{j=1}^{l}m_{j}+1}(k\mu )^{k(r+1)}}{((r+1)k-1)!} z^{\alpha-1}\nonumber\\
			&\hspace{6cm}\cdot\int_{0}^{\infty}\int_{0}^{y}p_{0}(u)(y-u)^{\sum_{j=1}^{l}m_{j}+k(r+1)}e^{-(\lambda+k\mu)(y-u)}e^{-yz^{\alpha}}\mathrm{d}u\mathrm{d}y\,\,\nonumber\\
			&=\sum_{\substack{m_{j},r\geq0\\\sum_{j=1}^{l}jm_{j} -r=n}}A_{r}^{n,k}(\textbf{m})\frac{z^{\alpha a_{r}^{n,k}(\textbf{m})-\pi_{r}^{n,k}(\textbf{m})}}{(\lambda+k\mu+z^{\alpha})^{a_{r}^{n,k}(\textbf{m})}}+\sum_{\substack{m_{j},r\geq0\\\sum_{j=1}^{l}jm_{j} -r=n}}\sum_{h=1}^{\infty}\sum_{w=0}^{\infty}\sum_{\substack{\sum_{j=1}^{l}m'_{j}=w\\m'_{j}\geq0}}B_{r,h,w}^{n,k}(\textbf{m},\textbf{m}')\nonumber\\
			&\ \ \cdot\frac{z^{\alpha b_{r,h,w}^{n,k}(\textbf{m},\textbf{m}')-\rho_{r,h,w}^{n,k}(\textbf{m},\textbf{m}')}}{(\lambda+k\mu+z^{\alpha})^{b_{r,h,w}^{n,k}(\textbf{m},\textbf{m}')}} -\sum_{\substack{m_{j},r\geq0\\\sum_{j=1}^{l}jm_{j} -r=n}}\sum_{h=1}^{\infty}\sum_{w=0}^{\infty}\sum_{\substack{\sum_{j=1}^{l}m'_{j}=w\\m'_{j}\geq0}}C_{r,h,w}^{n,k}(\textbf{m},\textbf{m}')\frac{z^{\alpha c_{r,h,w}^{n,k}(\textbf{m},\textbf{m}')-\delta_{r,h,w}^{n,k}(\textbf{m},\textbf{m}')}}{(\lambda+k\mu+z^{\alpha})^{c_{r,h,w}^{n,k}(\textbf{m},\textbf{m}')}}\label{pnkalpha}.
	\end{align}}On taking the inverse Laplace transform on both side of \eqref{pnsalpha} and \eqref{pnkalpha}, we get the required result.
    \end{proof}
    \begin{remark}
    	For $l=1$, Theorem \ref{thm4.3} reduces to Theorem 5.5 of Ascione {\it{et al.}} (2020). For $\alpha=1$, it reduces to Theorem \ref{t3.2} which can be shown by using \eqref{3.15} in \eqref{pnst} and \eqref{pnkt}.
    \end{remark}
    \subsection{Time-changed queue length process}
	Let us define a time-changed queue length process as $\mathcal{L}^{\alpha}(t)=g_{k}(\mathcal{Q}^{\alpha}(t))$, $t\geq0$. That is,
	\begin{equation*}
		\mathcal{L}^{\alpha}(t)=\left\{
		\begin{array}{ll}
			k(\mathcal{N}^{\alpha}(t)-1)+s,\, \mathcal{N}^{\alpha}(t)>0,\\
			0,\, \mathcal{N}(t)=0,
		\end{array}
		\right.  
	\end{equation*} 
	where $g_{k}(\cdot)$ is as defined in \eqref{gk}. We denote its state probabilities by
	\begin{equation*}
		p_{n}^{\alpha}(t)=\mathrm{Pr}(\mathcal{L}^{\alpha}(t)=n|\mathcal{L}^{\alpha}(0)=0), \, n\geq0,
	\end{equation*}
	such that $	p_{-n}^{\alpha}(t)=0$, $n\geq1$.
	    
	    \begin{theorem}
	    	The state probabilities of $\{\mathcal{L}^{\alpha}(t)\}_{t\geq0}$ solve the following fractional Cauchy problem:
	    	\begin{align}\label{cptlength}
	    		\left.
	    		\begin{aligned}
	    			\frac{\mathrm{d}^{\alpha}}{\mathrm{d}t^{\alpha}}p_{0}^{\alpha}(t)
	    			&=-\lambda p_{0}^{\alpha}(t)+k\mu p_{1}^{\alpha}(t), \\
	    			\frac{\mathrm{d}^{\alpha}}{\mathrm{d}t^{\alpha}}p_{n}^{\alpha}(t)
	    			&=-(\lambda + k \mu)p_{n}^{\alpha}(t)+k\mu p_{n+1}^{\alpha}(t)+\lambda\sum_{m=1}^{n}c'_{m}p_{n-m}^{\alpha}(t), \, n \geq 1,  
	    		\end{aligned}
	    		\right\}
	    	\end{align}
	    	where 
	    	\begin{equation*}
	    		c'_{m}=
	    		\begin{cases}
	    			\lambda_{i}/\sum_{j=1}^{l}\lambda_{j},\,m=ik,i=1,2,\dots,l,\\
	    			0,\, \text{otherwise},
	    		\end{cases}
	    	\end{equation*}
	    	with $p_{0}^{\alpha}(0)=1$ and $p_{n}^{\alpha}(0)=0,\,n\geq1$.
	    	\end{theorem}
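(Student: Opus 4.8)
The plan is to deduce this system from Theorem \ref{cptfde} by pushing it through the bijection $g_k$, in exactly the same way that the system \eqref{lengthDE} was obtained from Theorem \ref{t3.1} in Section \ref{subsec3.1}. The starting observation is the time-changed analogue of \eqref{gkn}: since $\mathcal{L}^{\alpha}(t)=g_k(\mathcal{Q}^{\alpha}(t))$ and $g_k:\mathcal{H}_0\to\mathbb{N}_0$ is a bijection, and since $Y_\alpha$ only rescales the time variable and therefore commutes with the state relabelling, we have $p_n^{\alpha}(t)=p_{a_k(n),b_k(n)}^{\alpha}(t)$ for every $n\geq0$, together with $p_{-n}^{\alpha}(t)=0$ for $n\geq1$. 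Thus the state probabilities of $\{\mathcal{L}^{\alpha}(t)\}_{t\geq0}$ are just a reindexing of the {\it state-phase} probabilities of $\{\mathcal{Q}^{\alpha}(t)\}_{t\geq0}$, and the same Caputo derivative $\frac{\mathrm{d}^{\alpha}}{\mathrm{d}t^{\alpha}}$ appears on the left-hand side of both systems.

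Next I would rewrite each of the six families of equations in Theorem \ref{cptfde} under the substitution $m=g_k(n,s)$. A state $(n,s)$ with $1\leq s\leq k-1$ maps to $m=k(n-1)+s$ not divisible by $k$, while $(n,k)$ maps to $m=nk$ divisible by $k$; in either case the phase-advance term $k\mu p_{n,s+1}^{\alpha}(t)$ (respectively $k\mu p_{n+1,1}^{\alpha}(t)$) becomes $k\mu p_{m+1}^{\alpha}(t)$, the diagonal term $-(\lambda+k\mu)p_{n,s}^{\alpha}(t)$ is unchanged, and the arrival term $\lambda\sum_{m'} c_{m'}p_{n-m',\,\cdot}^{\alpha}(t)$ becomes $\lambda\sum_i c_i p_{m-ik}^{\alpha}(t)$. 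Putting $c'_{ik}=c_i=\lambda_i/\lambda$ for $i=1,\dots,l$ and $c'_j=0$ otherwise turns this last sum into $\lambda\sum_{j=1}^{m}c'_j p_{m-j}^{\alpha}(t)$, with the truncation at $j=m$ automatic because $p_{-n}^{\alpha}\equiv0$. The boundary equation for $m=0$ reproduces the first line of \eqref{cptlength}, and the initial conditions $p_0^{\alpha}(0)=1$, $p_n^{\alpha}(0)=0$ transfer directly from those in Theorem \ref{cptfde}.

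As a self-contained alternative (which I would record as a remark), one can bypass Theorem \ref{cptfde} entirely: from $p_n^{\alpha}(t)=\int_0^\infty p_n(y)\,\mathrm{Pr}(Y_\alpha(t)\in\mathrm{d}y)$, take Laplace transforms using \eqref{ltLnu} to get $\tilde p_n^{\alpha}(z)=z^{\alpha-1}\int_0^\infty p_n(y)e^{-yz^{\alpha}}\mathrm{d}y$, insert the governing system \eqref{lengthDE} for $p_n(y)$, and then convert the resulting integral identity back into Caputo form via \eqref{cptlp}, mirroring the closing manipulation in the proof of Theorem \ref{cptfde}.

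I do not anticipate a deep obstacle: the argument is bookkeeping, and the one point deserving care is verifying that the within-service phase transitions $s\mapsto s-1$, which leave the customer count $n$ unchanged but decrease the label $m$ by one, are correctly captured by the single term $k\mu p_{m+1}^{\alpha}(t)$ on the right-hand side of the $m$-th equation, with no double counting at the boundary $s=k\leftrightarrow s=1$ where a genuine departure occurs. This is precisely the check that validates \eqref{lengthDE} in the non-fractional case, so it carries over verbatim.
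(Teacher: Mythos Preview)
Your proposal is correct and follows essentially the same route as the paper: the paper also invokes the bijection $g_k$ together with the relation $p_n^{\alpha}(t)=p_{a_k(n),b_k(n)}^{\alpha}(t)$, applies Theorem \ref{cptfde} case by case, and then collapses the cases using $p_{-n}^{\alpha}(t)=0$ for $n\geq1$. Your alternative Laplace-transform argument is not in the paper's proof but is a valid independent check.
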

    	\begin{proof}
    		From \eqref{gk} and \eqref{gkn}, and using Theorem \ref{cptfde}, we obtain
    		{\small\begin{align*}
    			\frac{\mathrm{d}^{\alpha}}{\mathrm{d}t^{\alpha}}p_{0}^{\alpha}(t)
    			&=-\lambda p_{0}^{\alpha}(t)+k\mu p_{1}^{\alpha}(t),\\
    			\frac{\mathrm{d}^{\alpha}}{\mathrm{d}t^{\alpha}}p_{n}^{\alpha}(t)
    			&=-(\lambda+k\mu)p_{n}^{\alpha}(t)+k\mu p_{n+1}^{\alpha}(t),\,1\leq n\leq k-1,\\
    			\frac{\mathrm{d}^{\alpha}}{\mathrm{d}t^{\alpha}}p_{k}^{\alpha}(t)
    			&=-(\lambda+k\mu)p_{k}^{\alpha}(t)+k\mu p_{k+1}^{\alpha}(t)+\lambda c_{1}p_{0}^{\alpha}(t), \\
    			\frac{\mathrm{d}^{\alpha}}{\mathrm{d}t^{\alpha}}p_{n}^{\alpha}(t)
    			&=-(\lambda+k\mu)p_{n}^{\alpha}(t)+k\mu p_{n+1}^{\alpha}(t)+\lambda\sum_{i=1}^{\text{min}\{n,l\}}c_{i}p_{m-ik}^{\alpha}(t),\,n=k(m-1)+s,\,1\leq s\leq k-1,  	\\
    				\frac{\mathrm{d}^{\alpha}}{\mathrm{d}t^{\alpha}}p_{n}^{\alpha}(t)
    			&=-(\lambda+k\mu)p_{n}^{\alpha}(t)+k\mu p_{n+1}^{\alpha}(t)+\lambda\sum_{i=1}^{\text{min}\{n,l\}}c_{i}p_{n-ik}^{\alpha}(t),\,n=mk.		
    		\end{align*}}
    	As $p_{-n}^{\alpha}(t)=0$, $n\geq1$, we get the required result.
    		\end{proof} 
	\begin{remark}
		For $l=1$, the system of differential equations \eqref{cptlength} reduces to that of the fractional Erlang queue (see Ascione {\it{et al.}} (2020), Eq. (28)), and for $\alpha=1$, it reduces to \eqref{lengthDE}.
	\end{remark}

	\begin{theorem}
		The mean queue length $\mathcal{M}^{\alpha}(t)=\mathds{E}(\mathcal{L}^{\alpha}(t)=n|\mathcal{L}^{\alpha}(0)=0)$ of time-changed Erlang queue with multiple arrivals is the solution of following fractional Cauchy problem:
		\begin{equation}\label{cpdtmalphat}
			\frac{\mathrm{d}^{\alpha}}{\mathrm{d}t^{\alpha}}\mathcal{M}^{\alpha}(t)=k\mu p_{0}^{\alpha}(t)-k\mu+\lambda\sum_{i=1}^{l}ic_{i},\,
			\mathcal{M}^{\alpha}(0)=0
		\end{equation}
	\end{theorem}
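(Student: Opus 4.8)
The plan is to exploit the subordination $\mathcal{L}^{\alpha}(t)=\mathcal{L}(Y_{\alpha}(t))$ and the independence of $\{Y_{\alpha}(t)\}_{t\geq0}$ and $\{\mathcal{L}(t)\}_{t\geq0}$, in the same spirit as the proofs of Theorems \ref{theorem4.2} and \ref{thm4.3}. Conditioning on the value of $Y_{\alpha}(t)$ gives
\[
\mathcal{M}^{\alpha}(t)=\mathbb{E}\big(\mathcal{L}(Y_{\alpha}(t))\big)=\int_{0}^{\infty}\mathcal{M}(y)\,\mathrm{Pr}(Y_{\alpha}(t)\in\mathrm{d}y),
\]
where $\mathcal{M}(y)=\mathbb{E}(\mathcal{L}(y)\mid\mathcal{L}(0)=0)$ is the mean queue length of the non time-changed process, whose closed form is \eqref{m1t}; the interchange of expectation and integration is legitimate because $\mathcal{M}$ is nonnegative and at most linear in its argument while $\mathbb{E}(Y_{\alpha}(t))<\infty$. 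Since $Y_{\alpha}(0)=0$ almost surely and $\mathcal{M}(0)=0$, this also yields the initial condition $\mathcal{M}^{\alpha}(0)=0$.

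I would then pass to Laplace transforms. Applying $\int_{0}^{\infty}e^{-zt}(\cdot)\,\mathrm{d}t$ to the identity above, interchanging it with the $\mathrm{d}y$-integral by Tonelli's theorem and using \eqref{ltLnu}, one obtains $\widetilde{\mathcal{M}^{\alpha}}(z)=z^{\alpha-1}\widetilde{\mathcal{M}}(z^{\alpha})$. Substituting \eqref{m1t} (equivalently, Laplace transforming \eqref{mtde}), using $\mathbb{L}\big(\int_{0}^{t}p_{0}(y)\,\mathrm{d}y\big)(z)=\widetilde{p_{0}}(z)/z$, and then invoking \eqref{p0nul} in the form $\widetilde{p_{0}}^{\alpha}(z)=z^{\alpha-1}\widetilde{p_{0}}(z^{\alpha})$, one is led after multiplying through by $z^{\alpha}$ to
\[
z^{\alpha}\widetilde{\mathcal{M}^{\alpha}}(z)=\frac{k\big(\lambda\sum_{i=1}^{l}ic_{i}-\mu\big)}{z}+k\mu\,\widetilde{p_{0}}^{\alpha}(z).
\]
Since $\mathcal{M}^{\alpha}(0)=0$, the left-hand side equals $\mathbb{L}\big(\frac{\mathrm{d}^{\alpha}}{\mathrm{d}t^{\alpha}}\mathcal{M}^{\alpha}(t)\big)(z)$ by \eqref{cptlp}; inverting the Laplace transform (the constant mapping to a constant and $\widetilde{p_{0}}^{\alpha}$ back to $p_{0}^{\alpha}$) yields the fractional Cauchy problem \eqref{cpdtmalphat}.

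The computation is otherwise routine; the two points that need care are (i) the Tonelli interchanges, both in the conditioning identity and in passing to the Laplace transform, which are justified by the nonnegativity and finiteness noted above; and (ii) the use of \eqref{cptlp}, which presupposes that $\mathcal{M}^{\alpha}$ is absolutely continuous on compact intervals with $\mathcal{M}^{\alpha}(0)=0$ --- this is inherited from the explicit form \eqref{m1t} of $\mathcal{M}$ and from the regularity of the law of $Y_{\alpha}(t)$ --- after which injectivity of the Laplace transform returns \eqref{cpdtmalphat}. As an alternative one may avoid subordination entirely and argue directly from the governing system \eqref{cptlength}: multiplying the equation for $p_{n}^{\alpha}$ by $n$, summing over $n\geq1$, re-indexing $\sum_{n}n\,p_{n+1}^{\alpha}(t)$ and $\sum_{n}n\sum_{m}c'_{m}p_{n-m}^{\alpha}(t)$, and using $\sum_{n\geq0}p_{n}^{\alpha}(t)=1$, $\sum_{m}c'_{m}=1$ and $\sum_{m}mc'_{m}=k\sum_{i=1}^{l}ic_{i}$, the terms proportional to $\mathcal{M}^{\alpha}(t)$ cancel and the stated equation drops out; in that route the delicate step is justifying that $\frac{\mathrm{d}^{\alpha}}{\mathrm{d}t^{\alpha}}$ commutes with the infinite sum defining $\mathcal{M}^{\alpha}$, together with the rearrangements --- all of which require $\mathcal{M}^{\alpha}(t)<\infty$ and suitable uniform bounds.
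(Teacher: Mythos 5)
Your proposal is correct, but your \emph{main} argument is not the one the paper uses for this theorem. The paper argues exactly along the lines of the route you relegate to a final aside: it takes the Caputo derivative of $\mathcal{M}^{\alpha}(t)=\sum_{n\geq0}np_{n}^{\alpha}(t)$ termwise, inserts the fractional system \eqref{cptlength}, and re-sums, using precisely your re-indexings $\sum_{n\geq1}np_{n+1}^{\alpha}(t)=\mathcal{M}^{\alpha}(t)+p_{0}^{\alpha}(t)-1$ and $\sum_{n\geq1}n\sum_{m=1}^{n}c'_{m}p_{n-m}^{\alpha}(t)=\mathcal{M}^{\alpha}(t)+k\sum_{i=1}^{l}ic_{i}$, so that the $\mathcal{M}^{\alpha}$ terms cancel. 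Your primary route instead goes through the subordination identity $\mathcal{M}^{\alpha}(t)=\int_{0}^{\infty}\mathcal{M}(y)\,\mathrm{Pr}(Y_{\alpha}(t)\in\mathrm{d}y)$ together with $\widetilde{\mathcal{M}^{\alpha}}(z)=z^{\alpha-1}\widetilde{\mathcal{M}}(z^{\alpha})$ and \eqref{p0nul}; this is essentially the machinery the paper deploys only in the \emph{next} theorem (cf.\ \eqref{m1talpha} and \eqref{meanalphat}) to get the explicit form \eqref{malpahat}, and your Laplace-domain identity $z^{\alpha}\widetilde{\mathcal{M}^{\alpha}}(z)=k\big(\lambda\sum_{i}ic_{i}-\mu\big)z^{-1}+k\mu\widetilde{p_{0}}^{\alpha}(z)$ checks out. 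What your route buys is that it rests on the already-established closed form \eqref{m1t} and the subordination relations, avoiding the interchange of the Caputo derivative with the infinite sum (which the paper performs without justification and which you rightly flag as the delicate point of the direct route); the cost is that you need the regularity of $\mathcal{M}^{\alpha}$ for \eqref{cptlp} and injectivity of the Laplace transform, which you do acknowledge. One further point in your favour: your derivation yields the constant $k\big(\lambda\sum_{i=1}^{l}ic_{i}-\mu\big)$, which is what the paper's own computation, \eqref{mtde} and \eqref{malpahat} give as well; the factor $k$ missing from $\lambda\sum_{i}ic_{i}$ in the printed statement \eqref{cpdtmalphat} appears to be a typo, not an error on your side.
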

	\begin{proof}
		By the definition of mean queue length, we have
		\begin{equation}\label{dtmt}
			\mathcal{M}^{\alpha}(t)=\sum_{n=0}^{\infty}np_{n}^{\alpha}(t).
		\end{equation}
		As $p_{0}^{\alpha}(0)=1$, we have $\mathcal{M}^{\alpha}(0)=0$.
		On taking the Caputo fractional derivative of order $\alpha$ on both sides of \eqref{dtmt} and using the system of differential equations \eqref{cptlength}, we get
		\begin{align}
			\frac{\mathrm{d}^{\alpha}}{\mathrm{d}t^{\alpha}}\mathcal{M}^{\alpha}(t)
			&=-(\lambda + k \mu)\mathcal{M}^{\alpha}(t)+k\mu\sum_{n=1}^{\infty}np_{n+1}^{\alpha}(t)+\lambda\sum_{n=1}^{\infty}n\sum_{m=1}^{n}c'_{m}p_{n-m}^{\alpha}(t).\label{malphat}
		\end{align}
		Note that
		\begin{equation}\label{sum1}
			\sum_{n=1}^{\infty}np_{n+1}^{\alpha}(t)=\mathcal{M}^{\alpha}(t)+p_{0}^{\alpha}(t)-1
		\end{equation}
		and
		\begin{align}
			\sum_{n=1}^{\infty}n\sum_{m=1}^{n}c'_{m}p_{n-m}^{\alpha}(t)
			&=\sum_{m=1}^{\infty}c_{m}'\sum_{n=m}^{\infty}np_{n-m}^{\alpha}(t)\nonumber\\
			&=\sum_{m=1}^{\infty}c_{m}'(\mathcal{M}^{\alpha}(t)+m)\nonumber\\
			&=\mathcal{M}^{\alpha}(t)+k\sum_{i=1}^{l}ic_{i}\label{sum2}.
		\end{align}
		On substituting \eqref{sum1} and \eqref{sum2} in \eqref{malphat}, we get the required result.
	\end{proof}
	\begin{remark}
		For $l=1$, the fractional Cauchy problem in \eqref{cpdtmalphat} reduces to that of the fractional Erlang queue (see Ascione {\it{et al.}} (2020), Eq. (42)), and for $\alpha=1$, it reduces to \eqref{mtde}.
	\end{remark}
	\begin{theorem}
		The mean queue length $\mathcal{M}^{\alpha}(t)$ of time-changed Erlang queue with multiple arrivals $\{\mathcal{Q}^{\alpha}(t)\}_{t\geq0}$, $0<\alpha<1$ is given by
		\begin{align}\label{malpahat}
			\mathcal{M}^{\alpha}(t)&=k\Big(\lambda\sum_{i=1}^{l}ic_{i}-\mu\Big)\frac{t^{\alpha}}{\Gamma({\alpha+1})}\nonumber\\
			&\ \ +k\mu\sum_{h=1}^{\infty}\sum_{n=0}^{\infty}\sum_{\substack{\sum_{j=1}^{l}m_{j}=n\\m_{j}\geq0}}C_{h,n}^{0}(\textbf{m})t^{\beta_{h,n}^{\mathcal{M}}(\textbf{m})-1}E_{\alpha,\beta_{h,n}^{\mathcal{M}}(\textbf{m})}^{\gamma_{h,n}^{0}(\textbf{m})}(-(\lambda+k\mu) t^{\alpha}),\,t\geq0,
		\end{align}
	where $\textbf{m}=(m_{1}, m_{2},\dots,m_{l})$ and $\beta_{h,n}^{\mathcal{M}}(\textbf{m})=\alpha\gamma_{h,n}^{0}(\textbf{m})+1$.
	\end{theorem}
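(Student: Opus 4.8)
The plan is to solve the fractional Cauchy problem of the preceding theorem by the Laplace transform, exactly as in the proof of Theorem~\ref{theorem4.2}. Tracing the computation \eqref{malphat}--\eqref{sum2} that yields \eqref{cpdtmalphat}, the Cauchy problem for the mean queue length is $\frac{\mathrm{d}^{\alpha}}{\mathrm{d}t^{\alpha}}\mathcal{M}^{\alpha}(t)=k\mu p_{0}^{\alpha}(t)+k\big(\lambda\sum_{i=1}^{l}ic_{i}-\mu\big)$ with $\mathcal{M}^{\alpha}(0)=0$. Applying the Laplace transform, using \eqref{cptlp} together with $\mathcal{M}^{\alpha}(0)=0$ and that a constant $c$ has Laplace transform $c/z$, one gets $z^{\alpha}\tilde{\mathcal{M}}^{\alpha}(z)=k\mu\,\tilde{p}_{0}^{\alpha}(z)+k\big(\lambda\sum_{i=1}^{l}ic_{i}-\mu\big)z^{-1}$, hence $\tilde{\mathcal{M}}^{\alpha}(z)=k\mu\, z^{-\alpha}\tilde{p}_{0}^{\alpha}(z)+k\big(\lambda\sum_{i=1}^{l}ic_{i}-\mu\big)z^{-\alpha-1}$.

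The second summand inverts immediately, since $\mathbb{L}^{-1}(z^{-\alpha-1})=t^{\alpha}/\Gamma(\alpha+1)$, and produces precisely the first term of \eqref{malpahat}. For the first summand I would insert the series \eqref{p0alpha4} for $\tilde{p}_{0}^{\alpha}(z)$, giving $k\mu\, z^{-\alpha}\tilde{p}_{0}^{\alpha}(z)=k\mu\sum_{h=1}^{\infty}\sum_{n=0}^{\infty}\sum_{\textbf{m}}C_{h,n}^{0}(\textbf{m})\, z^{\alpha\gamma_{h,n}^{0}(\textbf{m})-\beta_{h,n}^{0}(\textbf{m})-\alpha}/(\lambda+k\mu+z^{\alpha})^{\gamma_{h,n}^{0}(\textbf{m})}$, the inner sum being over $\textbf{m}=(m_{1},\dots,m_{l})$ with $m_{j}\geq0$ and $\sum_{j}m_{j}=n$. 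The key observation is the identity $\beta_{h,n}^{0}(\textbf{m})+\alpha=\alpha\gamma_{h,n}^{0}(\textbf{m})+1=\beta_{h,n}^{\mathcal{M}}(\textbf{m})$, immediate from $\beta_{h,n}^{0}(\textbf{m})=\alpha(\gamma_{h,n}^{0}(\textbf{m})-1)+1$ and $\beta_{h,n}^{\mathcal{M}}(\textbf{m})=\alpha\gamma_{h,n}^{0}(\textbf{m})+1$. With it, each summand becomes $k\mu\,C_{h,n}^{0}(\textbf{m})\, z^{\alpha\gamma_{h,n}^{0}(\textbf{m})-\beta_{h,n}^{\mathcal{M}}(\textbf{m})}/(z^{\alpha}+\lambda+k\mu)^{\gamma_{h,n}^{0}(\textbf{m})}$, which by \eqref{ltm} (with $\omega=-(\lambda+k\mu)$, $\gamma=\gamma_{h,n}^{0}(\textbf{m})$, $\beta=\beta_{h,n}^{\mathcal{M}}(\textbf{m})$) is the Laplace transform of $t^{\beta_{h,n}^{\mathcal{M}}(\textbf{m})-1}E_{\alpha,\beta_{h,n}^{\mathcal{M}}(\textbf{m})}^{\gamma_{h,n}^{0}(\textbf{m})}(-(\lambda+k\mu)t^{\alpha})$. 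Inverting term by term and combining the two contributions yields \eqref{malpahat}.

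The step I expect to be the main obstacle is the rigorous justification of the term-by-term inversion of the triple series, i.e., checking that $\sum_{h,n,\textbf{m}}|C_{h,n}^{0}(\textbf{m})|\,|z|^{\alpha\gamma_{h,n}^{0}(\textbf{m})-\beta_{h,n}^{\mathcal{M}}(\textbf{m})}/|\lambda+k\mu+z^{\alpha}|^{\gamma_{h,n}^{0}(\textbf{m})}$ converges on a suitable right half-plane, uniformly enough on vertical lines to interchange $\mathbb{L}^{-1}$ with the summation. This is the same estimate already used implicitly in the proof of Theorem~\ref{theorem4.2}, and it transfers here since the extra factor $k\mu z^{-\alpha}$ only improves the decay in $z$; the factorial structure of $C_{h,n}^{0}(\textbf{m})$ together with the weights $\lambda^{n}\prod_{i=1}^{l}\big(c_{i}^{m_{i}}/m_{i}!\big)$ makes the sums over $n$ and over the compositions $\textbf{m}$ summable. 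A minor bookkeeping point is the verification, carried out above through \eqref{malphat}--\eqref{sum2}, that the constant forcing term equals $k(\lambda\sum_{i=1}^{l}ic_{i}-\mu)$, which is consistent with the leading term of \eqref{malpahat} and reduces to \eqref{mtde} at $\alpha=1$.
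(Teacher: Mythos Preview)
Your proof is correct and reaches the same Laplace transform \eqref{meanalphat} as the paper, but you arrive there by a different route. You take the Laplace transform of the fractional Cauchy problem \eqref{cpdtmalphat} from the preceding theorem and then substitute the series \eqref{p0alpha4} for $\tilde{p}_{0}^{\alpha}(z)$, using the clean identity $\beta_{h,n}^{0}(\textbf{m})+\alpha=\beta_{h,n}^{\mathcal{M}}(\textbf{m})$ to match the exponents. The paper instead bypasses the Cauchy problem entirely: it starts from the subordination identity $\mathcal{M}^{\alpha}(t)=\int_{0}^{\infty}\mathcal{M}(y)\,\mathrm{Pr}(Y_{\alpha}(t)\in\mathrm{d}y)$, substitutes the explicit formula \eqref{m1t} for $\mathcal{M}(y)$, takes the Laplace transform using \eqref{ltLnu}, and then evaluates $z^{-1}\int_{0}^{\infty}p_{0}(u)e^{-uz^{\alpha}}\mathrm{d}u$ via \eqref{int3}. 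Your approach has the advantage of being the natural continuation of the preceding theorem and avoids re-deriving the subordination representation of $\mathcal{M}^{\alpha}$; the paper's approach has the advantage of not relying on that theorem at all, so it is logically independent of the termwise differentiation implicit in \eqref{malphat}--\eqref{sum2}. Both routes handle the term-by-term inversion at the same level of rigor as Theorem~\ref{theorem4.2}, so your concern about that step is no worse here than elsewhere in the paper.
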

	\begin{proof}
		From \eqref{gkn} and using \eqref{pnsalphat} in \eqref{dtmt}, we have
		\begin{equation}\label{m1talpha}
			\mathcal{M}^{\alpha}(t)=\int_{0}^{\infty}\mathcal{M}(y)\mathrm{Pr}(Y_{\alpha}(t)\in\mathrm{d}y).
		\end{equation}
		On substituting \eqref{m1t} in \eqref{m1talpha}, we obtain
		{\small\begin{equation}\label{m1ttt}
			\mathcal{M}^{\alpha}(t)=k\Big(\lambda\sum_{i=1}^{l}ic_{i}-\mu\Big)\int_{0}^{\infty}y\mathrm{Pr}(Y_{\alpha}(t)\in\mathrm{d}y)+k\mu\int_{0}^{\infty}\int_{0}^{y}p_{0}(u)\mathrm{d}u\mathrm{Pr}(Y_{\alpha}(t)\in\mathrm{d}y).
		\end{equation}}On taking the Laplace transform on both sides of \eqref{m1ttt} and by using \eqref{ltLnu}, we get
		\begin{align}
			\Tilde{\mathcal{M}}^{\alpha}(z)&=k\Big(\lambda\sum_{i=1}^{l}ic_{i}-\mu\Big)z^{\alpha-1}\int_{0}^{\infty}ye^{-yz^{\alpha}}\mathrm{d}y+k\mu z^{\alpha-1}\int_{0}^{\infty}\int_{0}^{y}p_{0}(u)e^{-yz^{\alpha}}\mathrm{d}u\mathrm{d}y\nonumber\\
			&=k\Big(\lambda\sum_{i=1}^{l}ic_{i}-\mu\Big)\frac{1}{z^{\alpha+1}}+k\mu z^{\alpha-1} \int_{0}^{\infty}\int_{u}^{\infty}p_{0}(u)e^{-yz^{\alpha}}\mathrm{d}y\mathrm{d}u\nonumber\\
			&=k\Big(\lambda\sum_{i=1}^{l}ic_{i}-\mu\Big)\frac{1}{z^{\alpha+1}}+k\mu z^{-1} \int_{0}^{\infty}p_{0}(u)e^{-uz^{\alpha}}\mathrm{d}u\nonumber\\
			&=k\Big(\lambda\sum_{i=1}^{l}ic_{i}-\mu\Big)\frac{1}{z^{\alpha+1}}+k\mu z^{-1}\sum_{h=1}^{\infty}\sum_{n=0}^{\infty}\sum_{\substack{\sum_{j=1}^{l}m_{j}=n\\m_{j}\geq0}}\frac{C_{h,n}^{0}(\textbf{m})}{(\lambda+k\mu+z^{\alpha})^{\gamma_{h,n}^{0}(\textbf{m})}}\nonumber\\
			&=k\Big(\lambda\sum_{i=1}^{l}ic_{i}-\mu\Big)\frac{1}{z^{\alpha+1}}+k\mu\sum_{h=1}^{\infty}\sum_{n=0}^{\infty}\sum_{\substack{\sum_{j=1}^{l}m_{j}=n\\m_{j}\geq0}}C_{h,n}^{0}(\textbf{m})\frac{z^{\alpha\gamma_{h,n}^{0}(\textbf{m})-\beta_{h,n}^{\mathcal{M}}(\textbf{m})}}{(\lambda+k\mu+z^{\alpha})^{\gamma^{0}_{h,n}(\textbf{m})}},\label{meanalphat}
		\end{align}
	where the penultimate step follows on using \eqref{int3}.
%		Now, consider
%		{\small\begin{align}\label{lpm1t}
%			\int_{0}^{\infty}\int_{0}^{y}p_{0}(u)e^{-yz^{\alpha}}\mathrm{d}u\mathrm{d}y
%			&=\int_{0}^{\infty}\int_{u}^{\infty}p_{0}(u)e^{-yz^{\alpha}}\mathrm{d}y\mathrm{d}u\nonumber\\
%			&=\frac{1}{z^{\alpha}}\int_{0}^{\infty}p_{0}(u)e^{-uz^{\alpha}}\mathrm{d}u\nonumber\\
%			&=\frac{1}{z^{\alpha}}\sum_{h=1}^{\infty}\sum_{n=0}^{\infty}\sum_{\substack{\sum_{j=1}^{l}m_{j}=n\\m_{j}\geq0}}\frac{C_{h,n}^{0}(\textbf{m})}{(\lambda+k\mu+z^{\alpha})^{\gamma_{h,n}^{0}(\textbf{m})}},
%		\end{align}}
%where the last step is obtained by using \eqref{int3}.
%		Now, on substituting \eqref{lpm1t} in \eqref{m1lp}, we get
%		\begin{equation}
%			\Tilde{\mathcal{M}}^{\alpha}(z)=k\Big(\lambda\sum_{i=1}^{l}ic_{i}-\mu\Big)\frac{1}{z^{\alpha+1}}+k\mu\sum_{h=1}^{\infty}\sum_{n=0}^{\infty}\sum_{\substack{\sum_{j=1}^{l}m_{j}=n\\m_{j}\geq0}}C_{h,n}^{0}(\textbf{m})\frac{z^{\alpha\gamma_{h,n}^{0}(\textbf{m})-\beta_{h,n}^{\mathcal{M}}(\textbf{m})}}{(\lambda+k\mu+z^{\alpha})^{\gamma^{0}_{h,n}(\textbf{m})}}.
%		\end{equation}
	Finally, on taking the inverse Laplace transform of \eqref{meanalphat} and using \eqref{ltm}, we get the required result.
	\end{proof}
	\begin{remark}
		For $l=1$, the mean queue length in \eqref{malpahat} reduces to that of the fractional Erlang queue (see Ascione {\it{et al.}} (2020), Eq. (47)).
	\end{remark}
	For an integrable function $f:[0,a]\rightarrow\mathbb{R}$, the Riemann-Liouville fractional integral is defined as (see Kilbas {\it{et al.}} (2006))
	\begin{equation}\label{fint}
		\mathcal{I}^{\alpha}_{t}f(t)\coloneqq\frac{1}{\Gamma(\alpha)}\int_{0}^{t}(t-y)^{\alpha-1}f(y)\mathrm{d}y.
	\end{equation}
	\begin{remark}
		On taking the Riemann fractional integral on both sides of \eqref{cpdtmalphat}, we get
		\begin{equation}\label{integrlp0t}
			\mathcal{M}^{\alpha}(t)=k\mu\mathcal{I}_{t}^{\alpha}p_{0}^{\alpha}(t)+k\Big(\lambda\sum_{i=1}^{l}ic_{i}-\mu\Big)\frac{t^{\alpha}}{\Gamma({\alpha+1})}.
		\end{equation}
		On comparing \eqref{malpahat} with \eqref{integrlp0t}, we get the Riemann integral of zero state probability of $\{\mathcal{Q}^{\alpha}(t)\}_{t\geq0}$
		\begin{equation*}
			\mathcal{I}_{t}^{\alpha}p_{0}^{\alpha}(t)=\sum_{h=1}^{\infty}\sum_{n=0}^{\infty}\sum_{\substack{\sum_{j=1}^{l}m_{j}=n\\m_{j}\geq0}}C_{h,n}^{0}(\textbf{m})t^{\beta_{h,n}^{\mathcal{M}}(\textbf{m})-1}E_{\alpha,\beta_{h,n}^{\mathcal{M}}(\textbf{m})}^{\gamma_{h,n}^{0}(\textbf{m})}(-(\lambda+k\mu) t^{\alpha}).
		\end{equation*}
	\end{remark}

	\section*{Acknowledgement}
	The
	research of first author was supported by a UGC fellowship, NTA reference no. 231610158041, Govt. of India.
	
\end{document}